\DeclarePairedDelimiterX{\norm}[1]{\lVert}{\rVert}{#1}
\DeclarePairedDelimiterX{\normtto}[1]{\lVert}{\rVert_{2\rightarrow 1}}{#1}
\DeclarePairedDelimiterX{\abs}[1]{\lvert}{\rvert}{#1}
\DeclarePairedDelimiterX{\inp}[2]{\langle}{\rangle}{#1, #2}
\DeclareMathOperator*{\argmin}{arg\,min}
\newcommand{\mb}[1]{\mathbb{#1}}
\newcommand{\spccc}{\textbf{SPCCC}}
\newcommand{\lpccc}{\textbf{LP-CCC}}
\renewcommand{\S}{S}
\newcommand{\Sp}{S^\prime}
\newcommand{\fc}{FindComp}
\newcommand{\pg}{\text{Partition Graph}}
\newcommand{\giid}{\text{Generate IID Samples}}
\newcommand{\itest}{\text{Identity Test}}
\newcommand{\chg}{\chi}
\newcommand{\lprp}[1]{\left(#1\right)}
\newcommand{\lbrb}[1]{\left\{#1\right\}}
\newcommand{\ot}{\widetilde{O}}
\newcommand{\omt}{\widetilde{\Omega}}
\newcommand{\hitt}{\text{HitT}}
\newcommand{\dmc}{\text{Dist}}
\newcommand{\dhel}{d_{Hel}}
\newcommand{\dtv}{d_{TV}}
\newcommand{\distt}{\text{Dist}}
\newcommand{\nue}{\eta}
\newcommand{\sq}{\text{Sq}}
\newtheorem{claim}{Claim}
\newtheorem{question}{Question}
\newtheorem{definition}{Definition}
\newtheorem{theorem}{Theorem}
\newtheorem{lemma}{Lemma}
\newtheorem{corollary}{Corollary}
\title{Testing Markov Chains Without Hitting}
\author{%
 Yeshwanth Cherapanamjeri\\
 UC Berkeley\\
 \texttt{yeshwanth@berkeley.edu}
 \and
 Peter L. Bartlett \\
 UC Berkeley\\
 \texttt{peter@berkeley.edu}
}
\date{}
\begin{document}

\maketitle

\begin{abstract}
  We study the problem of identity testing of markov chains. In this setting, we are given access to a single trajectory from a markov chain with unknown transition matrix $\bm{Q}$ and the goal is to determine whether $\bm{Q} = \bm{P}$ for some known matrix $\bm{P}$ or $\distt(\bm{P}, \bm{Q}) \geq \epsilon$ where $\distt$ is suitably defined. In recent work by \cite{daskalakis2017testing}, it was shown that it is possible to distinguish between the two cases provided the length of the observed trajectory is at least super-linear in the hitting time of $\bm{P}$ which may be arbitrarily large.

  In this paper, we propose an algorithm that avoids this dependence on hitting time thus enabling efficient testing of markov chains even in cases where it is infeasible to observe every state in the chain. Our algorithm is based on combining classical ideas from approximation algorithms with techniques for the spectral analysis of markov chains.
\end{abstract}

\section{Introduction}

Statistical hypothesis testing is the principal method for lending statistical validity to claims made about the real world and is a vital step in any scientific enterprise. In the framework of statistical hypothesis testing, an investigator subjects hypotheses made as part of their inquiry by testing it against data collected from the real world. While the abstract framework of hypothesis testing is very powerful, its usefulness is limited by the range of hypotheses for which statistically efficient procedures have been developed. Furthermore, these tests also need to be computationally viable with large datasets. Unfortunately, most cases for which efficient procedures are known are concerned with the setting where we have access to independent and identically distributed observations from some underlying distribution. This severely restricts the use of these procedures.

Motivated by these considerations, recent work by \cite{daskalakis2017testing} studied the problem of identity testing of markov chains given a single trajectory where strong correlations may exist between successive samples. They propose an algorithm to test whether the transition matrix, $\bm{Q}$, underlying the observed trajectory is equal to a known transition matrix, $\bm{P}$, or sufficiently far from it. They propose a notion of difference between markov chains which takes into account the connectivity properties of the chain to ensure that the problem remains well posed. However, a major drawback of their approach is that their runtime depends on the hitting time of $\bm{P}$ and an open question from their work is whether this dependence is truly necessary and conjectured it was not.

The approach of \cite{daskalakis2017testing} is to convert the identity testing problem on markov chains to the simpler problem of identity testing of distributions given iid samples. The main idea is to use the observed trajectory to simulate samples from the distribution characterized by $\frac{1}{n} \bm{P}$. To simulate one sample from this distribution, one first picks a row of $\bm{P}$ uniformly at random and sample from the row using the trajectory. However, to generate the number of samples required to distinguish the two chains via this method, one needs to sample every row of $\bm{P}$ at least once with high probability. This leads to the dependence on the hitting time in the length of the observed trajectory.

In this work, we propose an algorithm for identity testing of markov chains that avoids the dependence on the hitting time of $\bm{P}$. That is, we would like to solve the identity testing problem even in settings where one may not even be able to observe all the states in the chain. Similar to \cite{daskalakis2017testing}, we reduce the identity testing problem on markov chains to simpler identity testing problems on distributions given iid samples. However, instead of a reduction to a single identity testing problem, we formulate several identity testing problems. Our main insight is that to distinguish two sufficiently different markov chains, it is sufficient to analyze the trajectory in subsets of states which are close to being disconnected from the rest of the state space but well connected within themselves. That is, we formulate for each such subset $S$, an identity testing problem whose solution also resolves the testing problem on markov chains. However, this approach is throttled by two main difficulties:

\begin{enumerate}
  \item Computing these ``high-information'' subsets and
  \item Ensuring we have sufficiently many samples from these subsets
\end{enumerate}

Our first main requirement of these subsets is that they have enough information to distinguish two different markov chains. We use as a sufficient criterion the property that these sets are poorly connected to the rest of the state space. The next crucial property that we will require is that the identity testing problem defined by the set can be simulated given a small number of samples from the set. We show that this property too can be related to the expansion properties of the set. This is guaranteed for a candidate set, $S$ if for all subsets, $R \subset S$, $R$ is well-connected to the rest of the set. Given these two requirements, our goal is to compute sets well connected within themselves and poorly connected to the rest of the state space. To do so, we generalize classical approximation algorithms for the Sparsest-Cut problem. However, this only ensures the first required property. To ensure the second property of being well connected within the set, we combine this approach with a divide and conquer framework. We then recursively extract such ``high-information'' subsets to obtain a partitioning of the state space into several ``high-information'' sets and a single ``low-information'' set.

To tackle the second problem of ensuring we have enough points from these ``high-information'' subsets in the observed trajectory, we use techniques from the spectral analysis of markov chains to show that the chain does not spend too much time in the ``low-information'' component of the chain. The failure of our graph partitioning algorithm to partition the ``low-information'' component means that all subsets of the ``low-information'' component are well connected to the rest of the state space. This ensures that the chain escapes from this component fairly quickly if it enters it. 

\textbf{Related Work: } In the statistics community, a variety of tests have been developed for distribution testing in the iid scenario: Cramer-von Mises (\cite{cramer1928composition}), $\chi^2$ (\cite{pearson1900x}), Kolmogorov-Smirnov (\cite{smirnov1939estimation}) and for more recent results, \cite{agresti2013categorical, d2017goodness}. However, the analysis of these methods pertains to the asymptotic distribution of the test statistic without finite sample guarantees. In the computer science community, there has been a flurry of recent work in this setting, with a focus on finite sample lower bounds and statistical and  computational tractability:  \cite{batu2004sublinear, acharya2015optimal, canonne2016testing, daskalakis2018distribution, daskalakis2017testing, valiant2011testing, chan2014optimal, diakonikolas2015testing, rubinfeld2009testing, valiant2011testing, valiant2013instance, valiant2017automatic, rubinfeld2012taming, blais2017distribution, batu2000testing, batu2001testing, paninski2008coincidence, acharya2015optimal, diakonikolas2016new, diakonikolas2016collision}.

The problem of identity testing and estimation in markov chains was, to the best of our knowledge, first studied in the seminal works of \cite{bartlett1951frequency, anderson1957statistical, billingsley1961statistical}. However, the results obtained are in the asymptotic regime with the number of samples tending to infinity. Recent work by \cite{daskalakis2017testing} provide finite sample analysis for the identity testing of markov chains but the length of the trajectory required depends on delicate connectivity properties of the chain like hitting times which may be arbitrarily large.

The sparsest cut problem has been intensely studied with the breakthrough result of \cite{leighton1999multicommodity} devising the first $O(\log n)$ approximation algorithm followed by a subsequent result by \cite{linial1995geometry} which interprets the algorithm from a metric embedding perspective (\cite{bourgain1985lipschitz}). The $O(\log n)$ barrier was subsequently improved to $O(\sqrt{\log n})$ in another beautiful result by \cite{arora2009expander}. These algorithms have been used in divide and conquer based approaches to several combinatorial problems (\cite{shmoys1997cut}) and constructing approximation algorithms for unique games (\cite{trevisan2005approximation}). While graph decomposition techniques have been studied previously (see, for example, \cite{spielman2004nearly, trevisan2005approximation, goldreich1999sublinear}), approaches based on spectral techniques yield weaker guarantees than those based on sparsest cut approximations. Graph decompositions based on \cite{leighton1999multicommodity} have been studied in \cite{trevisan2005approximation} however these results are not strong enough for our setting as they only imply the existence of internally well-connected partitions with potentially several ``low-information'' sets whereas, we crucially require that there exists at most one such set.

The relationship between the sparsest cut value of a markov chain and its spectral properties are well known (\cite{cheeger1969lower, sinclair1989approximate}) and have numerous applications (\cite{chung1997spectral,kannan1997random,lee2018kannan}). In the analysis of our algorithm, we use these techniques to bound hitting times in markov processes restricted to subsets of the state space and escape times from subsets of the state space where we bound the top eigenvalue of sub-matrices of the transition matrix as opposed to the second eigenvalue of the transition matrix.

\section{Preliminaries}

We denote scalar values by small letters such as $a$, vectors with bolded small letters such as $\bm{v}$ and matrices with bolded capital letters like $\bm{P}$. We use capital letters like $P,Q,R$ chiefly to denote subsets of $[n]$ and calligraphic capital letters $\mathcal{S}$ to denote sets of such subsets. For a vector $\bm{v}$, $v_i$ denotes the $i^{th}$ entry in the vector. For a matrix $\bm{P}$ and two subsets $R$ and $S$, $\bm{P}_i$ denotes the $i^{th}$ column of a matrix, $\bm{P}_{ij}$ denote the $j^{th}$ entry of the $i^{th}$ row of the matrix, $\bm{P}_{R,S}$  corresponds to the $\abs{R} \times \abs{S}$ sized sub-matrix corresponding to the rows in $R$ and columns in $S$ and $\bm{P}_R$ is used as shorthand for $\bm{P}_{R,R}$. We use $\ot$ and $\omt$ to hide logarithmic factors in $n$ and $\epsilon$. We use $\rho(\bm{M})$ to denote the largest eigenvalue of the matrix $\bm{M}$. We restate the definitions of the Total Variation and Hellinger distances (as stated in \cite{daskalakis2017testing}):

\begin{definition}
  \label{def:dheltv}
  For two distributions $\bm{p}$ and $\bm{q}$ over a support $[n]$, we have the Hellinger and Total Variation distances, denoted by $\dhel$ and $\dtv$ respectively, defined by:
  \begin{equation*}
    \dhel^2 (\bm{p}, \bm{q}) = \frac{1}{2} \sum_{i \in [n]} (\sqrt{\bm{p}_i} - \sqrt{\bm{q}_i})^2 = 1 - \sum_{i \in [n]} \sqrt{\bm{p}_i\bm{q}_i}, \qquad \dtv = \frac{1}{2} \sum_{i \in [n]} \abs{\bm{p}_i - \bm{q}_i}
  \end{equation*}
  Furthermore, the two distances enjoy the following relationship:
  \begin{equation*}
    \sqrt{2} \dhel(\bm{p}, \bm{q}) \geq \dtv (\bm{p}, \bm{q}) \geq \dhel^2 (\bm{p}, \bm{q})
  \end{equation*}
\end{definition}

Now, we will introduce some notations for markov chains:

\subsection{Markov Chains}
In this paper, we are only concerned with finite-dimensional markov chains:
\begin{definition}
  \label{def:fmc}
  A finite dimensional homogeneous markov chain is a stochastic process $\{X_t\}_{t \in \mb{N}}$ over a state space $[n]$ which satisfies the following property:
  \begin{equation*}
    \mb{P} \lbrb{X_{t + 1} = j | X_0 = i_0, \dots, X_{t - 1} = i_{t - 1}, X_t = i} = p_{i,j}
  \end{equation*}
  That is the probability of the state at time step $t + 1$ given the states from $X_{0}, \dots, X_t$ only depends on the previous time step and this transition probability does not depend on the specific time step $t$.
\end{definition}

We will use $\bm{w}$ to denote a finite sample from a markov chain and $\bm{w}_\infty$ to denote an infinite sample from the markov chain. We will denote the transition matrices of markov chains usually by $\bm{P}$ and $\bm{Q}$ and we will be concerned with the symmetric case where both matrices $\bm{P}$ and $\bm{Q}$ are symmetric. We will also assume that $\bm{P}$ and $\bm{Q}$ are irreducible. We will now, restate the distance measure between two transition matrices $\bm{P}$ and $\bm{Q}$ as stated in \cite{daskalakis2017testing}:

\begin{definition}[Distance between Markov Chains]
  For two symmetric transition matrices $\bm{P}$ and $\bm{Q}$, the distance between them is defined by:
  \begin{equation*}
    \dmc (\bm{P}, \bm{Q}) = 1 - \rho (\sq (\bm{P}, \bm{Q}))
  \end{equation*}
  where the function $\sq: \mb{R}_+^{n \times n} \times \mb{R}_+^{n \times n} \rightarrow \mb{R}_+^{n \times n}$ is defined by:
  \begin{equation*}
    (\sq (\bm{P}, \bm{Q}))_{ij} = \sqrt{\bm{P}_{ij} \bm{Q}_{ij}}
  \end{equation*}
\end{definition}

\begin{definition}
  \label{def:mcsub}
  Let $\bm{P}$ be a symmetric irreducible markov chain and $T \subset [n]$ be a subset of states. Now, let $\bm{Y} = Y_1, Y_2, \dots$ be a markov process with transition matrix $\bm{P}$ and let $\tau_1, \tau_2, \dots$ be defined such that:
  \begin{equation*}
    \tau_1 = \min \{j: Y_j \in T\},\qquad \tau_i = \min \{j: j > \tau_{i - 1} \wedge Y_j \in T\}
  \end{equation*}
  Then the sequence $\bm{X} = Y_{\tau_1}, Y_{\tau_2}, \dots $ is defined to be the markov process, $\bm{Y}$, observed on $T$.
\end{definition}

We will now state some definitions which we will relate to the spectral properties of the markov chain. The first definition is the notion of expansion of a set of states which intuitively measures how well the set of states is connected to the rest of the state space:

\begin{definition}[Expansion]
  Given a matrix, $\bm{P}$, with positive entries, the expansion of a set $S$, denoted by $h_{\bm{P}}(S)$ is defined as:
  \begin{equation*}
    h_{\bm{P}} (S) = \frac{\sum_{i \in S, j \notin S} \bm{P}_{ij}}{\min (\abs{S}, \abs{\bar{S}})}
  \end{equation*}
\end{definition}

The Cheeger constant of a markov chain is defined as the minimum of the expansion over all subsets of the state space.

\begin{definition}[Cheeger Constant]
  The Cheeger Constant of a Markov Chain with transition matrix, $\bm{P}$, is the minimum expansion of any subset of the state space.

  \begin{equation*}
    \chg(\bm{P}) = \min_{S \subset [n]} h_{\bm{P}} (S)
  \end{equation*}
\end{definition}

The following relationship between the Cheeger constant of a markov chain and the spectrum of its transition matrix is well known from the work of \cite{sinclair1989approximate}.

\begin{lemma}[\cite{sinclair1989approximate}]
  \label{lem:sinc}
  Let $\bm{P}$ be the transition matrix of a symmetric markov chain with eigen values $1 = \lambda_1 > \lambda_2 \geq \dots \geq \lambda_n \geq -1$. Furthermore, assume that $\bm{P}$ satisfies $\chg (\bm{P}) \geq \alpha > 0$. Then, we have:

  \begin{equation*}
    \lambda_2 \leq 1 - \frac{\alpha^2}{2}
  \end{equation*}
\end{lemma}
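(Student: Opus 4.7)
The plan is to prove this classical Cheeger inequality by combining the variational characterization of $\lambda_2$ with the standard level-set (sweep) argument. Since $\bm{P}$ is symmetric with row sums one, the Courant--Fischer theorem gives
\begin{equation*}
1 - \lambda_2 = \min_{\bm{v} \perp \bm{1},\ \bm{v} \neq 0} \frac{\bm{v}^T (I - \bm{P}) \bm{v}}{\norm{\bm{v}}^2} = \min_{\bm{v} \perp \bm{1},\ \bm{v} \neq 0} \frac{\frac{1}{2}\sum_{i,j} \bm{P}_{ij}(v_i - v_j)^2}{\norm{\bm{v}}^2}.
\end{equation*}
Fixing a second eigenvector $\bm{v}$, I would construct a nonnegative test vector $\bm{u}$ with support of size at most $n/2$ whose Rayleigh quotient is simultaneously at most $1 - \lambda_2$ and at least $\alpha^2/2$; comparing the two bounds yields the claim.

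For the reduction, let $m$ be a median of the entries of $\bm{v}$ and set $\bm{w} = \bm{v} - m \bm{1}$. Since $(I-\bm{P})\bm{1} = \bm{0}$, the numerator of the Rayleigh quotient is unchanged, while $\norm{\bm{w}}^2 = \norm{\bm{v}}^2 + n m^2 \ge \norm{\bm{v}}^2$ (using $\bm{v} \perp \bm{1}$), so the Rayleigh quotient of $\bm{w}$ is at most $1-\lambda_2$. Decompose $\bm{w} = \bm{w}^+ - \bm{w}^-$ into its positive and negative parts; by the median choice both are supported on at most $n/2$ coordinates. The pointwise inequality $(w_i - w_j)^2 \ge (w_i^+ - w_j^+)^2 + (w_i^- - w_j^-)^2$ (checked by cases on the signs of $w_i,w_j$), combined with $\norm{\bm{w}}^2 = \norm{\bm{w}^+}^2 + \norm{\bm{w}^-}^2$ and the mediant inequality, shows that at least one of $\bm{w}^+, \bm{w}^-$, call it $\bm{u}$, satisfies $\bm{u}^T(I-\bm{P})\bm{u} / \norm{\bm{u}}^2 \le 1 - \lambda_2$.

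The heart of the argument is lower bounding this Rayleigh quotient via the Cheeger hypothesis. By Cauchy--Schwarz with weights $\bm{P}_{ij}$,
\begin{equation*}
\left(\sum_{i,j} \bm{P}_{ij} \abs{u_i^2 - u_j^2}\right)^2 \le \left(\sum_{i,j} \bm{P}_{ij}(u_i - u_j)^2\right)\left(\sum_{i,j} \bm{P}_{ij}(u_i + u_j)^2\right),
\end{equation*}
and the last factor is bounded by $2\sum_{i,j}\bm{P}_{ij}(u_i^2 + u_j^2) = 4 \norm{\bm{u}}^2$ using that $\bm{P}$ is symmetric with row and column sums equal to one. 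Hence it suffices to prove $\sum_{i,j}\bm{P}_{ij}\abs{u_i^2 - u_j^2} \ge 2 \alpha \norm{\bm{u}}^2$. Sort the coordinates so that $u_1^2 \ge u_2^2 \ge \cdots$, telescope $\abs{u_i^2 - u_j^2} = \sum_{k=i}^{j-1}(u_k^2 - u_{k+1}^2)$ for $i < j$, and exchange sums to rewrite the quantity as $2\sum_k (u_k^2 - u_{k+1}^2)\sum_{i \in S_k, j \notin S_k}\bm{P}_{ij}$ where $S_k = \{1,\dots,k\}$. Only indices $k \le n/2$ contribute (for larger $k$ both $u_k$ and $u_{k+1}$ vanish), and for such $k$ we have $\min(\abs{S_k}, \abs{\overline{S_k}}) = k$, so the Cheeger hypothesis gives $\sum_{i \in S_k, j \notin S_k}\bm{P}_{ij} \ge \alpha k$. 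An Abel summation then yields $\sum_k k(u_k^2 - u_{k+1}^2) = \norm{\bm{u}}^2$, producing the desired bound and completing the proof.

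The main subtlety will be ensuring that the sweep genuinely uses only sets of size at most $n/2$, which is exactly what the median shift and the selection of one of $\bm{w}^\pm$ arrange; the remaining ingredients (the Cauchy--Schwarz step, the Abel summation, and the case analysis for $(w_i-w_j)^2 \ge (w_i^+-w_j^+)^2 + (w_i^--w_j^-)^2$) amount to routine manipulations.
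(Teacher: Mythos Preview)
Your argument is correct and is the standard Sinclair--Jerrum proof of the discrete Cheeger inequality. The paper itself does not prove this lemma: it is quoted from \cite{sinclair1989approximate} without proof, so there is no in-paper argument to compare against directly. The closest proof in the paper is that of Lemma~\ref{lem:tnb}, which bounds the top eigenvalue of a principal submatrix $\bm{P}_T$ rather than $\lambda_2$ of the full chain; there the Perron--Frobenius eigenvector is automatically nonnegative, so the median shift and the $w^\pm$ decomposition you perform are unnecessary, and that proof proceeds straight to the Cauchy--Schwarz and telescoping steps, which coincide with yours.
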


Now, we define the hitting time of a markov chain.

\begin{definition}
  \label{def:htt}
  Let $\bm{P}$ be the transition matrix of a markov chain, $\bm{X}$, over state space $[n]$. Let $\tau_j = \min\{t: X_t = j\}$. Then, the hitting time of $\bm{P}$, denoted by $\hitt (\bm{P})$ is defined as follows:
  \begin{equation*}
    \hitt (\bm{P}) = \max_{i, j \in [n]} \mb{E} [\tau_j | X_0 = i]
  \end{equation*}
\end{definition}

\subsection{Sparsest Cut}
Here, we will state some definitions relating to the graph decomposition algorithm we use for partitioning the state space of our markov chain. Our first definition is one that is closely related to the notion of expansion defined previously:

\begin{definition}[Cut Value]
  \label{def:cutVal}
  Given a non-negative matrix, $\bm{P}$, the Cut Value of a set $S$, is defined as:
  \begin{equation*}
    g_{\bm{P}} (S) = \frac{\sum_{i \in S, j \in \bar{S}} \bm{P}_{ij}}{\abs{S} \abs{\bar{S}}}
  \end{equation*}
\end{definition}

The Sparsest Cut problem is then defined as the problem of finding the set obtaining the minimum cut value over all subsets.

\begin{question}[Sparsest Cut]
  \label{que:spcut}
   Given a non-negative matrix $\bm{P}$, the goal is to find a subset $S^*$:

  \begin{equation*}
    S^* = \argmin_{S \subset [n]} g_{\bm{P}} (S)
  \end{equation*}
\end{question}

The Sparsest Cut problem is well known to be NP-Hard in general. However, good polynomial-time approximation algorithms are known to give a subset whose Cut Value which is within logarithmic factors of the Sparsest Cut value.

\section{Testing Markov Chains}
\label{sec:test}

In this section, we state and prove the main result of the paper. We introduce our algorithm for identity testing of markov chains and prove statistical and computational guarantees on its performance. As stated before, our algorithm follows the reduction framework of \cite{daskalakis2017testing} but instead of a reduction to a single distribution testing problem, we instead reduce the problem to multiple distinct distribution testing problems where each problem corresponds to a disjoint subset of the state space. The main insight of our algorithm is that to distinguish between two markov chains that are sufficiently far from each other, it is sufficient to perform a test in such ``high-information'' sets. Our algorithm proceeds along three main steps:

\begin{enumerate}
  \item \textbf{State Partitioning: } Partition the states into $S_1, \dots, S_k, T$ where the subsets $S_1, \dots, S_k$ are the ``high-information'' sets and $T$ is a single ``low-information'' set.
  \item \textbf{Generate IID Samples: } Check whether we have enough samples from one of the $S_i$ to generate samples for the iid distribution problem corresponding to it.
  \item \textbf{Run Identity Tester: } If so, return the result of the test or declare $\distt(\bm{P}, \bm{Q}) \geq \epsilon$.
\end{enumerate}

The full algorithm is described in Algorithm~\ref{alg:idtst} with supplementary algorithms for graph partitioning in Algorithms~\ref{alg:ec} and \ref{alg:pg} and to simulate iid samples in Algorithm~\ref{alg:geniid}. The main result of our paper is the following performance guarantee on Algorithm~\ref{alg:idtst}:

\begin{theorem}
  \label{thm:mainth}
  There is a polynomial time algorithm (Algorithm~\ref{alg:idtst}) which given access to $\ot\lprp{n / \epsilon^4}$ samples from a markov process with transition matrix $\bm{Q}$ and a symmetric transition matrix $\bm{P}$ correctly distinguishes between the two cases:
  \begin{equation*}
    \text{Case 1: } \bm{Q} = \bm{P}, \qquad \text{Case 2: } \distt(\bm{Q}, \bm{P}) \geq \epsilon
  \end{equation*}
  with probability at least $2/3$.
\end{theorem}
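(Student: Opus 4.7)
The proof combines three main ingredients: a recursive state-space partitioning built on a sparsest-cut approximation oracle; a per-subset reduction from chain identity testing to iid distribution identity testing; and a spectral escape-time bound for the low-information component. First, I would partition $[n]$ into $S_1, \dots, S_k, T$ by recursively invoking a logarithmic-approximation oracle for Sparsest Cut (Algorithm~\ref{alg:pg}). A set $S$ is accepted as high-information exactly when it has small cut with its complement in $[n]$ and no refinement reveals a further sparse cut inside $S$; the latter condition, enforced by termination of the recursion, is precisely what will enable iid sample simulation, since it guarantees the induced chain on $S$ mixes quickly. The residual $T$ by construction admits no further sparse cut and is thus internally expanding on $\bm{P}_T$.

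Second, for each $S_i$ I would reduce to an iid identity testing problem. The key observation is that the chain observed on $S_i$ (Definition~\ref{def:mcsub}) produces samples from rows of $\bm{P}_{S_i}$ that are effectively iid: since every subset of $S_i$ has large cut within $S_i$, the internal hitting time is polylogarithmic, so Algorithm~\ref{alg:geniid} efficiently simulates $\tilde{O}(\sqrt{|S_i|}/\epsilon^2)$ iid samples per subset, which suffices for a standard iid identity tester. Summing sample budgets across subsets and charging $\text{poly}\log n$ trajectory steps per simulated iid sample yields the claimed $\tilde{O}(n/\epsilon^4)$ total length.

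Third, for completeness (Case~1) each sub-test passes with high probability when $\bm{Q} = \bm{P}$, and a union bound over $k \leq n$ subsets gives overall acceptance. For soundness (Case~2) I would need to show that $\distt(\bm{P}, \bm{Q}) \geq \epsilon$, which is equivalent to $\rho(\sq(\bm{P}, \bm{Q})) \leq 1 - \epsilon$, localizes: either some $\bm{P}_{S_i}$ and $\bm{Q}_{S_i}$ differ enough in the iid sense to be flagged, or the empirical visit counts to some $S_i$ are themselves anomalous. Simultaneously, the internal expansion of $T$ combined with Lemma~\ref{lem:sinc} (applied to the sub-matrix $\bm{P}_T$ rather than to the full chain) yields $\rho(\bm{P}_T) \leq 1 - \Omega(\chg(\bm{P}_T)^2)$, which translates into a bound on the expected dwell time of the chain inside $T$; this ensures the trajectory deposits enough samples into the informative subsets $S_i$ to feed the iid tester.

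The main obstacle I anticipate is the soundness reduction: translating the global spectral gap $\rho(\sq(\bm{P}, \bm{Q})) \leq 1 - \epsilon$ into a per-subset discrepancy witnessed by at least one high-information $S_i$. This requires a decomposition of the top eigenvector of $\sq(\bm{P}, \bm{Q})$ along the partition $\{S_1, \dots, S_k, T\}$ and an argument that $T$ cannot absorb the discrepancy — its internal expansion forces the off-diagonal mass and hence the witness $\ell_2$-weight onto $\bigcup_i S_i$, and a pigeonhole or averaging step then pins it to a single $S_i$. Executing this Cheeger-style bookkeeping on $\sq(\bm{P}, \bm{Q})$ in parallel with the partitioning guarantees, while preserving the $\epsilon^4$ dependence rather than blowing up to $\epsilon^{-8}$ or worse, is the delicate step on which the whole argument hinges.
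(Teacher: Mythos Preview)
Your overall architecture matches the paper: partition via a sparsest-cut-based recursion, reduce each high-information set to an iid identity test, and use a Cheeger-type bound on $\rho(\bm{P}_T)$ to control time spent in the low-information set. But you have misidentified where the difficulty lies, and this leads you to propose machinery the paper does not need.

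\medskip
\textbf{Soundness localization is not the hard step.} You frame ``translating $\rho(\sq(\bm{P},\bm{Q}))\le 1-\epsilon$ into a per-subset discrepancy'' as the crux, and propose decomposing the top eigenvector of $\sq(\bm{P},\bm{Q})$ along the partition followed by a pigeonhole argument. The paper avoids all of this. The point is that the partitioning already guarantees each $S\in\mathcal{S}$ has \emph{low boundary}: $\sum_{i,j\in S}\bm{P}_{ij}\ge (1-\epsilon/16)\lvert S\rvert$. This single property, not internal expansion, is what drives soundness. Take $v=\lvert S\rvert^{-1/2}\bm{1}_S$; since $\sq(\bm{P},\bm{Q})$ is symmetric, $v^\top \sq(\bm{P},\bm{Q})\,v\le \rho(\sq(\bm{P},\bm{Q}))\le 1-\epsilon$. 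Unwinding the Hellinger distance between $\distt(S,\bm{P})$ and $\distt(S,\bm{Q})$ gives $\dhel^2\ge 1-O(\epsilon)-v^\top\sq(\bm{P},\bm{Q})\,v\ge \Omega(\epsilon)$ in the main case, and a trivial TV argument handles the case where $\bm{Q}$ has large boundary on $S$. So \emph{every} high-information $S$ is a witness; there is no pigeonhole, no eigenvector surgery, and the $\epsilon$-dependence is immediate (Lemma~\ref{lem:helbnd}). The extra element $\eta$ in $\distt(S,\cdot)$, which absorbs transitions leaving $S$, is what makes this algebra close up.

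\medskip
\textbf{The escape-time bound is for completeness, not soundness.} You invoke the spectral bound on $\bm{P}_T$ in your soundness discussion. In the paper it is used only in Case~1 ($\bm{Q}=\bm{P}$), to ensure the trajectory visits some $S\in\mathcal{S}$ often enough that Algorithm~\ref{alg:geniid} succeeds. In Case~2 the algorithm simply outputs ``different'' whenever sample generation fails on every $S$, which is the correct answer; so no control on the $\bm{Q}$-trajectory is required there. This asymmetry is what lets the argument go through without any assumption on $\bm{Q}$ beyond $\distt(\bm{P},\bm{Q})\ge\epsilon$. Relatedly, the property you want for $T$ is not internal expansion of $\bm{P}_T$ but rather that every $R\subseteq T$ has $\sum_{i\in R,\,j\in\bar R}\bm{P}_{ij}/\lvert R\rvert\ge\Omega(\epsilon/\log n)$ with $\bar R=[n]\setminus R$; this is what yields $\lVert\bm{P}_T\rVert\le 1-\Omega(\epsilon^2/\log^2 n)$ via a first-eigenvalue analogue of Sinclair's argument (Lemma~\ref{lem:tnb}).

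\medskip
Two smaller corrections: the iid tester on $\distt(S,\cdot)$ has support $\lvert S\rvert^2+1$, so it needs $\tilde O(\lvert S\rvert/\epsilon^2)$ samples, not $\tilde O(\sqrt{\lvert S\rvert}/\epsilon^2)$; and the algorithm does not sum sample budgets over all $S_i$ but rather returns the result of the first $S$ for which sample generation succeeds, with a pigeonhole on visit counts (not on the eigenvector) guaranteeing at least one such $S$ exists in Case~1.
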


We start by giving a description of the type of the distributions for which we will employ our iid distribution tester:

\begin{definition}
  \label{def:ddef}
  Let $\bm{P}$ be the transition matrix of a symmetric markov chain and let $R$ be a subset of its states, then we have the distribution $\distt(R, \bm{P})$ defined over a support of size $\abs{R}^2 + 1$ composed of $\{(i,j): i,j \in R\} \cup \{\nue\}$ where $\nue$ denotes a element which is none of the elements $(i,j),\ i,j \in [n]$:
  \begin{equation*}
    \forall i,j \in R, (\distt (R, \bm{P})) ((i,j)) = \frac{\bm{P}_{ij}}{\abs{R}}, \qquad (\distt (R, \bm{P})) (\nue) = 1 - \frac{1}{\abs{R}} \sum_{i,j \in R} \bm{P}_{ij}
  \end{equation*}
\end{definition}

Therefore, given a partitioning of the state space such that distributions of the above type are sufficiently different, it suffices to have enough samples from any one of the partitions. However, there are two questions that need to be answered before we can proceed:

\begin{enumerate}
  \item Which partitions of the state space should we use to define such distributions?
  \item How many samples does one need from each of the partitions?
\end{enumerate}

It turns out that answers to both questions depend on the expansion properties of the sets. However, for the first property, we would like to have sets of low expansion, that is, subsets of the state space that are poorly connected to the rest of the state space while for the second property, we would like sets which are well connected internally. We will see that the second property relates to the hitting time of the markov process defined on the specific subset of states which is small if the original subset is well connected within itself. Therefore, one would like decompositions of the state space which are poorly connected to the rest of the state space but are well connected within themselves.

We would like to point out that conventional graph decomposition algorithms decompose the graph into subsets which are well connected internally while removing a very small number of edges which guarantees the first property for a large fraction of the subsets in terms of total number of states. However, for the remaining subsets, we have no such guarantees and therefore, it is unclear whether samples from such subsets can be used to distinguish the two markov chains. Even though one can guarantee that upon entering such subsets, the trajectory is likely to quickly leave the subset, one cannot guarantee that the next partition that the chain visits is a ``high-information'' subset. An alternate approach is to group all such ``low-information'' subsets into a single set but in this case, one loses the expansion guarantees of the individual sets which again makes it hard to bound the amount of time needed to escape from this set.

In light of all the above mentioned difficulties, we devise a new graph partitioning algorithm which decomposes the graph into potentially several well connected ``high-information'' sets and a single well connected ``low-information'' set from which one can guarantee that we escape from quickly and therefore reach a ``high-information'' set. We generalize conventional linear programming relaxations for the sparsest cut problem to respect component constraints and then use the above generalization to recursively partition the graph into subsets while measuring sparsest cut values with respect to the original graph instead of sub-graphs formed after removing partitions. The full details of our algorithm are deferred to the Appendix (See Algorithms~\ref{alg:ec} and \ref{alg:pg}).

Note that following the approach of \cite{daskalakis2017testing}, we can sample from $\distt(T, \bm{P})$ given access to an infinite word. Firstly, note that it is possible to sample from $\distt(T, \bm{P})$ by first sampling an element from $T$ and then sampling from the distribution corresponding to the sampled element in $\distt(T, \bm{P})$. Therefore, to obtain $l$ samples from $\distt(T, \bm{P})$, we start by first generating $l$ samples from $\text{Uniform}(T)$. Let the number of times we generated state $i\in T$ be denoted by $r_i$. Now, we simply scan the infinite word sequentially and each time we encounter an element $j \in T$ at position $t$, we reject the sample if $j$ has been encountered more than $r_j$ times or add the transition $j \rightarrow w_{t + 1}$ to our samples if $w_{t + 1} \in T$ or add $\nue$ to our samples if $w_{t + 1} \notin T$. The correctness of the described procedure follows from the markov property which ensures that all the transitions generated previously are independent of the ones generated after. The procedure is formally described in Algorithm~\ref{alg:geniid}.

\begin{algorithm}[H]
  \caption{Generate IID Samples}
  \label{alg:geniid}
  \begin{algorithmic}[1]
    \STATE \textbf{Input}: Finite word $\bm{w} \in [n]^m$, Subset $T$, Number of samples $l$

    \STATE $\bm{v} \leftarrow l\text{ samples from Uniform}(T)$
    \STATE $\bm{r} \leftarrow \text{Histogram}(\bm{v})$
    \STATE $\mathcal{S} \leftarrow \{\}$
    \FOR {$i = 1:m-1$}
      \STATE $j \leftarrow w_i$
      \IF {$\bm{r}_j > 0$ and $w_{i + 1} \in T$}
        \STATE $\mathcal{S} \leftarrow \mathcal{S} \cup (j, w_{i + 1})$
      \ELSIF {$\bm{r}_j > 0$ and $w_{i + 1} \notin T$}
        \STATE $\mathcal{S} \leftarrow \mathcal{S} \cup \nue$
      \ENDIF
      \STATE $\bm{r}_j \leftarrow \bm{r_j} - 1$
    \ENDFOR

    \IF {$\exists i \in T: \bm{r}_i > 0$}
      \STATE \textbf{Return: } False
    \ENDIF

    \STATE \textbf{Return: } $\mathcal{S}$
  \end{algorithmic}
\end{algorithm}

\begin{algorithm}[H]
  \caption{Identity Test of Markov Chains}
  \label{alg:idtst}
  \begin{algorithmic}[1]
    \STATE \textbf{Input}: Finite word $\bm{w} \in [n]^m$, Target Transition Matrix $\bm{P}$, Target Accuracy $\epsilon$

    \STATE $(\mathcal{S}, T) \leftarrow \pg (\bm{P}, \epsilon / 16)$
    \FOR {$S \in \mathcal{S}$}
      \STATE $l^\prime \leftarrow O\lprp{\frac{\abs{S} \log (n)}{\epsilon^2}}$
      \STATE $\mathcal{R}_S \leftarrow \giid (\bm{w}, S, l^\prime)$
      \IF {$\mathcal{R}_S \neq \text{False}$}
        \STATE \textbf{Return: } $\itest (\mathcal{R}_S, \distt(S, \bm{P}), \epsilon^2 / 32)$
      \ENDIF
    \ENDFOR

    \STATE \textbf{Return: } False
  \end{algorithmic}
\end{algorithm}

\section{Proof}
\label{sec:mpr}

In this section, we will present the proof of Theorem~\ref{thm:mainth}. As mentioned before, the guarantees provided by conventional graph partitioning algorithms are not strong enough to ensure the small trajectory lengths required for the success of Theorem~\ref{thm:mainth}. In the first subsection, we will describe some key lemmas relating to the graph decomposition technique detailed in Algorithm~\ref{alg:pg}.

\subsection{Markov Chain Decomposition}

This first lemma, proved in Appendix~\ref{prf:pg}, describes the expansion properties of the partition of the markov chain state space obtained from Algorithm~\ref{alg:pg}. Intuitively, it decomposes the graph into a set of subsets $\mathcal{S}$ which consists of sets which are well connected within themselves but poorly connected to the rest of the state space and a single set $T$ in which every subset is well connected to the rest of the state space. The sets in $\mathcal{S}$ refer to the ``high-information'' sets alluded to previously while the set $T$ is the single ``low-information'' subset of the state space.

\begin{lemma}
  \label{lem:pg}
  Algorithm~\ref{alg:pg} returns a tuple $(\mathcal{S}, T)$ such that we have for all $S \in \mathcal{S}$:
  \begin{equation*}
    \text{Claim 1: } \frac{\sum_{i, j \in S} \bm{P}_{ij}}{\abs{S}} \geq 1 - \beta
    \qquad \text{Claim 2: } \forall R \subset \S\ \frac{\sum_{i \in R, j \in (\S \setminus R)} \bm{P}_{ij}}{\min(\abs{R}, \abs{\S \setminus R})} \geq \Omega \lprp{\frac{\beta}{\log^2 n}}
  \end{equation*}
  And $T$ satisfies:

  \begin{equation*}
    \text{Claim 3: } \forall R \subseteq T\ \frac{\sum_{i \in R, j \in \bar{R}} \bm{P}_{ij}}{\abs{R}} \geq \Omega \lprp{\frac{\beta}{\log n}}
  \end{equation*}

  Furthermore, the subsets in $\mathcal{S}$ along with $T$ form a partition of $[n]$.
\end{lemma}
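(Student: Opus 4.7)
The plan is to analyze Algorithm~\ref{alg:pg} as a recursive extraction procedure that, at each stage, calls the modified sparsest cut routine (Algorithm~\ref{alg:ec}) based on the LP relaxation \lpccc{} to either (a) certify that the current active set has the expansion property required for $T$, or (b) produce a component $S$ which is then added to $\mathcal{S}$, with recursion continuing on the remainder. Since $\bm{P}$ is a symmetric stochastic matrix, row sums equal $1$, so $\sum_{i,j\in S}\bm{P}_{ij} = |S| - \sum_{i\in S, j\notin S}\bm{P}_{ij}$, and the three claims translate cleanly into expansion-type bounds: Claim~1 says the \emph{outer} boundary of $S$ is at most $\beta |S|$, Claim~2 says $S$ is \emph{internally} well-connected with respect to the original matrix $\bm{P}$, and Claim~3 says every subset of $T$ has large outer boundary in $[n]$.

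For Claim~1, my approach is to show by induction on the recursion that Algorithm~\ref{alg:pg} adds a set $S$ to $\mathcal{S}$ only after verifying (via the current sparsest-cut subroutine) that the total edge weight from $S$ to $[n]\setminus S$ does not exceed $\beta |S|$; rearranging via row-stochasticity gives the lower bound $1-\beta$. For Claim~2, I would exploit the fact that Algorithm~\ref{alg:ec} solves \lpccc{} on the \emph{original} matrix $\bm{P}$ (not the induced submatrix), and $S$ is only finalized when no further sparsest cut of $S$ can be extracted; hence every $R \subset S$ must satisfy the integral lower bound implied by the LP dual, losing the standard Leighton--Rao $O(\log n)$ factor once, and an additional $O(\log n)$ factor from the recursion/rounding step that relates the LP value to the certified integral cut value — this composition is what yields the $\Omega(\beta/\log^2 n)$ rate. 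For Claim~3, an analogous argument: when the algorithm returns without extracting any further subset from $T$, the LP certificate guarantees that no $R \subseteq T$ has total boundary into $\bar R$ (in the original graph) smaller than $\Omega(\beta/\log n)$; here only one approximation factor is paid, since the bound is on escape from $R$ to its global complement, which is exactly what \lpccc{} certifies when it fails to find a cut.

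The partition property — that $\mathcal{S}\cup\{T\}$ partitions $[n]$ — is immediate from the recursive structure, since each recursive call removes a disjoint extracted set from the active pool and the final residual is declared to be $T$; I would verify this by a straightforward loop invariant.

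The main obstacle will be Claim~2, specifically the interplay between the \emph{component-constrained} LP and the recursive removal of subsets. The subtlety is that the sparsest cut values in Claims~2 and~3 are measured in the \emph{original} graph $\bm{P}$, but the algorithm partitions based on a shrinking active set; I have to argue that the component constraints in \lpccc{} correctly ``anchor'' extracted sets so that failure to find a cut inside some region implies a lower bound on expansion back to $[n]$, not merely to the current active set. Ensuring the integrality gap of \lpccc{} matches that of the classical Leighton--Rao relaxation — and that the rounding in Algorithm~\ref{alg:ec} preserves this guarantee under component constraints — is where most of the technical work should go. Once that is established, Claims~1--3 follow by combining the termination conditions of the two algorithms with standard manipulations using row-stochasticity of $\bm{P}$.
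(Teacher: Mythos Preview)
Your overall architecture is right: Lemma~\ref{lem:pg} follows by reading off the guarantees that Algorithm~\ref{alg:ec} provides at each call (this is the content of Lemma~\ref{lem:ep} in the paper), and the partition property is indeed a trivial loop invariant. Claims~1 and~2 for each $S\in\mathcal{S}$ are inherited verbatim from the termination guarantees of Extract Component, exactly as you outline. However, there is one genuine gap in your plan for Claim~3, and one point where your description of Claim~1 would lead you astray.

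For Claim~3, you write that when \lpccc{} fails to find a cut it ``guarantees that no $R\subseteq T$ has total boundary into $\bar R$ smaller than $\Omega(\beta/\log n)$.'' That is not quite what it certifies. The failure of \fc{} gives $h_{\bm{P}}(R)\geq\Omega(\beta/\log n)$, where $h_{\bm{P}}(R)$ has $\min(|R|,|\bar R|)$ in the denominator, whereas the claim needs $|R|$ there. If $|R|\leq n/2$ these coincide, but for $|R|>n/2$ you must show that $|R|$ and $|\bar R|$ are comparable, i.e.\ that $|T|\leq cn$ for some constant $c<1$. The paper obtains $|T|\leq 8n/9$ by a balancing argument that your proposal does not mention: when Extract Component returns \textsf{False} it also certifies $\frac{1}{|T|}\sum_{i,j\in T}\bm{P}_{ij}\leq 1-\beta/8$ (Claim~4 of Lemma~\ref{lem:ep}), so $\sum_{i\in T,j\in\bar T}\bm{P}_{ij}\geq \tfrac{\beta}{8}|T|$; combining this with Claim~1 for each $S\in\mathcal{S}$ gives $\tfrac{\beta}{8}|T|\leq\sum_{S\in\mathcal{S}}\beta|S|=\beta(n-|T|)$, hence $|T|\leq 8n/9$. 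Without this step your argument for Claim~3 does not close.

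For Claim~1, your framing (``adds $S$ to $\mathcal{S}$ only after verifying that the edge weight from $S$ to $[n]\setminus S$ does not exceed $\beta|S|$'') misstates what the algorithm checks. The \emph{initial} set $S_0$ has outer boundary at most $\tfrac{\beta}{8}|S_0|$, but the while loop in Algorithm~\ref{alg:ec} then repeatedly peels off pieces, and each peel can push the ratio down. The bound $\frac{\sum_{i,j\in S}\bm{P}_{ij}}{|S|}\geq 1-\beta$ is not a termination check but the outcome of an accumulation argument: there are at most $\log n$ ``halving'' epochs, each incurring at most $\tfrac{\beta}{4\log n}$ loss because the internal threshold is deliberately set to $\beta/(8\log n)$. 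This is also the true source of the second $\log n$ in Claim~2 (not a separate ``rounding'' step): the threshold $\beta/(8\log n)$ is chosen so that the $\log n$ epochs sum to at most $\beta$, and the Leighton--Rao factor on top of that threshold yields $\Omega(\beta/\log^2 n)$.
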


Our next lemma, proved in Appendix~\ref{prf:helbnd}, shows that the distributions from Definition~\ref{def:ddef} are far in Hellinger distance if the original markov chains are far.

\begin{lemma}
  \label{lem:helbnd}
  Let $\bm{P}$ and $\bm{Q}$ be transition matrices of symmetric markov chains such that $\dmc(\bm{P}, \bm{Q}) \geq \epsilon$. Suppose now, that $T \subseteq [n]$ satisfies:

  \begin{equation*}
    \frac{\sum_{i, j \in T} \bm{P}_{ij}}{\abs{T}} \geq 1 - \frac{\epsilon}{16}
  \end{equation*}

  Then, we have:

  \begin{equation*}
    \dhel^2 \lprp{\distt(T, \bm{P}), \distt(T, \bm{Q})} \geq \frac{\epsilon^2}{32}
  \end{equation*}
\end{lemma}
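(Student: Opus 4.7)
The plan is to combine a Perron--Frobenius restriction argument with a Cauchy--Schwarz bound and a short case analysis on the $\bm{Q}$-mass inside $T$.

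First I would show that the local quantity $\sigma := \frac{1}{|T|}\sum_{i,j\in T}\sqrt{\bm{P}_{ij}\bm{Q}_{ij}}$ inherits the global spectral bound. Because $\sq(\bm{P},\bm{Q})$ is a symmetric nonnegative matrix, Perron--Frobenius implies its top eigenvalue equals $\rho(\sq(\bm{P},\bm{Q}))$ and is achieved by a nonnegative eigenvector, so $\rho(\sq(\bm{P},\bm{Q})) = \max_{\|x\|=1} x^\top \sq(\bm{P},\bm{Q}) x$. Extending any unit vector supported on $T$ by zeros shows that $\rho(\sq(\bm{P},\bm{Q})_T) \leq \rho(\sq(\bm{P},\bm{Q})) \leq 1-\epsilon$; plugging the uniform test vector $\bm{1}_T/\sqrt{|T|}$ into the quadratic form then yields $\sigma \leq 1-\epsilon$.

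Write $a := \tfrac{1}{|T|}\sum_{i,j\in T}\bm{P}_{ij}$ and $b := \tfrac{1}{|T|}\sum_{i,j\in T}\bm{Q}_{ij}$. By Definition~\ref{def:ddef}, the squared Hellinger distance unpacks to
\[
    \dhel^2\lprp{\distt(T,\bm{P}), \distt(T,\bm{Q})} = 1 - \sigma - \sqrt{(1-a)(1-b)}.
\]
The hypothesis gives $1 - a \leq \epsilon/16$, while a separate Cauchy--Schwarz across the $|T|^2$ transition terms yields the additional bound $\sigma \leq \sqrt{ab}$. Hence $\dhel^2 \geq 1 - \sqrt{ab} - \sqrt{(1-a)(1-b)}$, which is exactly the squared Hellinger distance between $\text{Bern}(a)$ and $\text{Bern}(b)$.

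I would close with a dichotomy on $b$. If $1 - b \leq 5\epsilon/16$, then $\sqrt{(1-a)(1-b)} \leq \sqrt{5}\,\epsilon/16$, and combined with $\sigma \leq 1-\epsilon$ this gives $\dhel^2 \geq \epsilon(1 - \sqrt{5}/16) \geq \epsilon/2 \geq \epsilon^2/32$. Otherwise $1 - b > 5\epsilon/16$, which together with $1 - a \leq \epsilon/16$ forces $a - b > \epsilon/4$; the Bernoulli bound above combined with the Hellinger--total-variation inequality $\dhel^2 \geq \dtv^2/2$ from Definition~\ref{def:dheltv} and $\dtv(\text{Bern}(a), \text{Bern}(b)) = |a - b|$ then gives $\dhel^2 \geq (a-b)^2/2 \geq \epsilon^2/32$. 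The main obstacle is the spectral restriction step: transferring the $(1-\epsilon)$ bound from the global $\rho(\sq(\bm{P},\bm{Q}))$ to the local sum $\sigma$ with \emph{no} loss is what lets us reason about $T$ in isolation, and it relies crucially on the nonnegativity and symmetry of $\sq(\bm{P},\bm{Q})$ via Perron--Frobenius. After that, the argument is bookkeeping with constants chosen so that both branches of the dichotomy land exactly at the target $\epsilon^2/32$.
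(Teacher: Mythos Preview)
Your proposal is correct and follows essentially the same two-case argument as the paper: the same threshold $1-b \gtrless 5\epsilon/16$, the same Rayleigh-quotient bound $\sigma \le 1-\epsilon$ via the uniform vector on $T$, and the same Hellinger--TV conversion in the small-$b$ case. The only cosmetic differences are that you work with the $1-\sum\sqrt{pq}$ form of $\dhel^2$ (the paper expands $\tfrac{1}{2}\sum(\sqrt{p}-\sqrt{q})^2$ and discards the $\nu$ term) and route Case~2 through the Bernoulli comparison $\sigma\le\sqrt{ab}$ rather than bounding the full TV directly; both land at $(a-b)^2/2 \ge \epsilon^2/32$.
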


In the next lemma, whose proof may be found in Appendix~\ref{prf:submc}, we analyze the spectral properties of the markov processes observed on a subset of states. This lemma will be crucial in bounding the number of samples we need to see from this subset in order to generate a large number of samples from the distribution corresponding to this subset.

\begin{lemma}
  \label{lem:submc}
  Let $\bm{P}$ be a symmetric irreducible markov chain and $T \subset [n]$ be a subset of states. Let $\bm{Y} = Y_1, Y_2, \dots$ be a markov process with transition matrix $\bm{P}$ and let $\bm{X} = X_1, X_2, \dots$ be the markov process observed on the subset $T$. Then, $\bm{X}$ is also a symmetric markov process with transition matrix:
  \begin{equation*}
    \bm{Q} = \bm{P}_T + \sum_{i = 1}^\infty \bm{P}_{T, \bar{T}} \bm{P}_{\bar{T}}^i \bm{P}_{\bar{T}, T}
  \end{equation*}
\end{lemma}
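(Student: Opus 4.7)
The plan is to establish each of the three implicit claims in turn: that $\bm{X}$ is indeed a Markov process, that its transition matrix is given by the stated series, and that this transition matrix is symmetric. First I would verify the Markov property of $\bm{X}$ by invoking the strong Markov property of $\bm{Y}$ at the successive return times $\tau_k$ (which are stopping times in the natural filtration). Since $\bm{P}$ is irreducible, $\tau_k < \infty$ almost surely for every $k$, so the sequence is well defined. Conditioned on $Y_{\tau_k} = i \in T$, the post-$\tau_k$ trajectory has the same law as $\bm{Y}$ started from $i$ and is independent of everything before $\tau_k$; in particular the distribution of $Y_{\tau_{k+1}}$ depends on the past only through $i$, establishing the Markov property and homogeneity of $\bm{X}$.

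Next I would compute the transition probabilities by decomposing the path from $i \in T$ to the next visit to $T$ according to the number of intermediate steps spent in $\bar{T}$. If the first return to $T$ occurs in exactly one step, the contribution is $\bm{P}_{ij} = (\bm{P}_T)_{ij}$. If the chain makes $k \geq 1$ steps inside $\bar{T}$ before stepping back into $T$ at $j$, the probability is
\begin{equation*}
\sum_{a_1, \dots, a_k \in \bar{T}} \bm{P}_{i, a_1} \bm{P}_{a_1, a_2} \cdots \bm{P}_{a_{k-1}, a_k} \bm{P}_{a_k, j} = \bigl(\bm{P}_{T, \bar{T}} \, \bm{P}_{\bar{T}}^{\,k-1} \, \bm{P}_{\bar{T}, T}\bigr)_{ij},
\end{equation*}
and summing over $k$ assembles the series in the statement (up to the indexing convention for $\bm{P}_{\bar{T}}^0 = \bm{I}$). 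Convergence of this geometric-type series is not vacuous: I would justify it by noting that since $\bm{P}$ is irreducible there must be at least one state in $\bar{T}$ with a transition into $T$, so $\bm{P}_{\bar{T}}$ is a strictly sub-stochastic symmetric matrix and hence $\rho(\bm{P}_{\bar{T}}) < 1$, which makes the Neumann-type series absolutely convergent entrywise.

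Finally, symmetry of $\bm{Q}$ follows by inspection. The matrix $\bm{P}_T$ is symmetric because $\bm{P}$ is, $\bm{P}_{\bar{T}}$ is symmetric for the same reason, and $\bm{P}_{\bar{T}, T} = \bm{P}_{T, \bar{T}}^\top$. Taking the transpose of the generic term,
\begin{equation*}
\bigl(\bm{P}_{T, \bar{T}} \bm{P}_{\bar{T}}^{\,i} \bm{P}_{\bar{T}, T}\bigr)^\top = \bm{P}_{\bar{T}, T}^\top (\bm{P}_{\bar{T}}^{\,i})^\top \bm{P}_{T, \bar{T}}^\top = \bm{P}_{T, \bar{T}} \bm{P}_{\bar{T}}^{\,i} \bm{P}_{\bar{T}, T},
\end{equation*}
so each term, and hence the sum, is symmetric.

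The argument is essentially bookkeeping once the strong Markov property is invoked; the only step that needs any real care is the convergence of the matrix series, and I expect that the mildly subtle point is confirming $\rho(\bm{P}_{\bar{T}}) < 1$ from irreducibility of $\bm{P}$ (which guarantees at least one row of $\bm{P}_{\bar{T}}$ sums to strictly less than one, combined with irreducibility of the restricted chain on $\bar{T}$ when non-empty, to force a Perron eigenvalue strictly below one). Everything else is a direct consequence of the Markov structure and the symmetry of $\bm{P}$.
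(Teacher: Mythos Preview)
Your proposal is correct and follows essentially the same approach as the paper: both establish the Markov property of $\bm{X}$ via the (strong) Markov property of $\bm{Y}$ at the return times $\tau_k$, and both compute $\bm{Q}$ by decomposing the first-return path according to how many intermediate steps lie in $\bar{T}$, arriving at the same matrix series. Your version is in fact more complete than the paper's: you explicitly verify symmetry of $\bm{Q}$ and justify convergence of the series via $\rho(\bm{P}_{\bar{T}})<1$, neither of which the paper spells out, and your parenthetical remark about the indexing convention correctly flags that the sum should begin at $i=0$ (equivalently include the $\bm{P}_{\bar T}^0=\bm I$ term), an off-by-one present in the paper's stated formula.
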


The next corollary is an application of Lemma~\ref{lem:submc} to markov processes defined on the ``high-information'' sets by exploiting their good expansion properties within the set itself. Its proof may be found in Appendix~\ref{prf:expsmc}.
\begin{corollary}
  \label{cor:expsmc}
  In the setting of Lemma~\ref{lem:submc}, suppose in addition that $T$ satisfies:
  \begin{equation*}
    \forall R \subset T\ \frac{\sum_{i \in R, j \in (T \setminus R)} \bm{P}_{ij}}{\min(\abs{R}, \abs{T \setminus R})} \geq \alpha
  \end{equation*}
  Then, the transition matrix $\bm{Q}$ of the chain $\bm{X}$ satisfies:
  \begin{equation*}
    \chg(\bm{Q}) \geq \alpha
  \end{equation*}
\end{corollary}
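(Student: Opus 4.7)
The plan is to observe that every entry of $\bm{Q}$ is at least the corresponding entry of $\bm{P}_T$, and then apply the hypothesis on $\bm{P}$ directly. Since $\bm{P}$ is a transition matrix, the submatrices $\bm{P}_T$, $\bm{P}_{T,\bar T}$, $\bm{P}_{\bar T}$, and $\bm{P}_{\bar T, T}$ all have non-negative entries, and therefore every term $\bm{P}_{T,\bar T} \bm{P}_{\bar T}^i \bm{P}_{\bar T, T}$ in the expression for $\bm{Q}$ given by Lemma~\ref{lem:submc} is entrywise non-negative. Consequently, for every $i, j \in T$ we have $\bm{Q}_{ij} \geq (\bm{P}_T)_{ij} = \bm{P}_{ij}$.

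Next, I would fix an arbitrary $R \subset T$ and compare the expansion terms directly. Since the state space of the observed chain $\bm{X}$ is $T$, the complement of $R$ with respect to this chain's state space is $T \setminus R$, so
\begin{equation*}
  h_{\bm{Q}}(R) = \frac{\sum_{i \in R, j \in T \setminus R} \bm{Q}_{ij}}{\min(\abs{R}, \abs{T \setminus R})} \geq \frac{\sum_{i \in R, j \in T \setminus R} \bm{P}_{ij}}{\min(\abs{R}, \abs{T \setminus R})} \geq \alpha,
\end{equation*}
where the first inequality uses the entrywise lower bound established above and the second uses the hypothesis on internal expansion within $T$. Taking the minimum over all $R \subset T$ gives $\chg(\bm{Q}) \geq \alpha$.

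There is no real obstacle here; the content of the corollary is essentially that passing from $\bm{P}$ restricted to $T$ to the ``observed on $T$'' chain can only increase transition probabilities between subsets of $T$, because the additional round-trip contributions through $\bar T$ are non-negative. The only minor subtlety worth flagging is the domain of the cut: one must be careful to use $T \setminus R$ rather than $[n] \setminus R$ when computing $h_{\bm{Q}}(R)$, since the state space of the observed chain is $T$ and that is precisely what makes the hypothesis of the corollary the right object to invoke.
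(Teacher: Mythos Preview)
Your proposal is correct and follows essentially the same argument as the paper: observe that $\bm{Q}_{ij}\ge \bm{P}_{ij}$ because the extra terms in the formula for $\bm{Q}$ from Lemma~\ref{lem:submc} are non-negative, and then apply the internal-expansion hypothesis on $T$ directly. The paper's write-up is terser (it restricts to $|S|\le |T|/2$ so the $\min$ becomes $|S|$), but the content is identical.
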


We now bound the hitting time of markov processes defined on ``high-information'' subsets. See Appendix~\ref{prf:htb} for the proof.
\begin{lemma}
  \label{lem:htb}
  Let $\bm{P}$ be the transition matrix of a symmetric markov chain, over state space $[n]$, satisfying $\chg(\bm{P}) \geq \alpha > 0$. Then, the hitting time of $\bm{P}$ is bounded as follows:

  \begin{equation*}
    \text{HitT}(\bm{P}) \leq \ot \lprp{\frac{n}{\alpha^2}}
  \end{equation*}
\end{lemma}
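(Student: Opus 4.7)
The plan is to bound the hitting time via a standard two-step reduction: first bound the mixing time of the chain using Cheeger's inequality (Lemma~\ref{lem:sinc}), and then convert the mixing-time bound to a hitting-time bound using the fact that a symmetric stochastic matrix has uniform stationary distribution with mass $1/n$ on every state. Since the chain is symmetric and irreducible on $[n]$, the uniform distribution $\bm{\pi} = (1/n, \dots, 1/n)$ is stationary. Applying Lemma~\ref{lem:sinc} to the hypothesis $\chg(\bm{P}) \geq \alpha$ yields $\lambda_2 \leq 1 - \alpha^2/2$, i.e.\ a spectral gap of at least $\alpha^2/2$.

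The first technical point I would address is that Lemma~\ref{lem:sinc} only controls $\lambda_2$ from above, while the mixing-time argument needs control of $\max(|\lambda_2|,|\lambda_n|)$. I would therefore pass to the lazy chain $\bm{P}' = (\bm{I} + \bm{P})/2$, which has the same stationary distribution, has all eigenvalues in $[0,1]$, and whose Cheeger constant equals $\chg(\bm{P})/2 \geq \alpha/2$ (every off-diagonal entry is halved, the boundary sums halve, and the denominator is unchanged). Applying Lemma~\ref{lem:sinc} to $\bm{P}'$ gives $\max_{i\geq 2} |\lambda_i(\bm{P}')| \leq 1 - \alpha^2/8$. Since the hitting time of $\bm{P}$ is at most that of $\bm{P}'$ (each step of the lazy chain is, in expectation, half a step of the original chain, so a coupling argument shows $\mathbb{E}_{\bm{P}}[\tau_j \mid X_0 = i] \leq \mathbb{E}_{\bm{P}'}[\tau_j \mid X_0 = i]$), it suffices to bound $\hitt(\bm{P}')$.

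For the lazy chain, the standard spectral bound for reversible chains with uniform stationary distribution gives
\begin{equation*}
 \max_{i,j}\, \lvert (\bm{P}')^t_{ij} - 1/n \rvert \leq \sqrt{n}\,(1 - \alpha^2/8)^t,
\end{equation*}
so after $\tau = O(\log(n)/\alpha^2)$ steps we have $(\bm{P}')^\tau_{ij} \geq 1/(2n)$ for every pair $i,j$. I would then split time into consecutive blocks of length $\tau$ and observe that, regardless of the starting state of a block, state $j$ is hit at the end of that block with probability at least $1/(2n)$. Hence the number of blocks needed to hit $j$ is stochastically dominated by a geometric random variable with mean $2n$, giving $\mathbb{E}[\tau_j\mid X_0 = i] \leq 2n\tau = \ot(n/\alpha^2)$. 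Combined with the lazy-chain reduction and maximizing over $i,j$, this yields the claimed bound.

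The only real obstacle is the eigenvalue-sign issue noted above; once that is handled via the lazy chain, everything else is routine. A small bookkeeping matter is making sure the logarithmic factors pushed into $\ot$ absorb both the $\log n$ from mixing to TV distance $1/(2n)$ and any constants introduced when passing from $\bm{P}$ to $\bm{P}'$, which is immediate since all such overheads are polylogarithmic in $n$.
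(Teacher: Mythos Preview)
Your proposal is correct and follows essentially the same route as the paper: modify the chain to eliminate the negative-eigenvalue issue, apply Lemma~\ref{lem:sinc} to obtain a spectral gap of order $\alpha^2$, use this to get mixing in $O(\log n/\alpha^2)$ steps, and then run a geometric block argument to convert mixing into a hitting-time bound of $\ot(n/\alpha^2)$. The only cosmetic difference is that the paper passes to $\bm{Q}=\tfrac{1}{2}(\bm{P}+\bm{P}^2)$ rather than your lazy chain $\bm{P}'=\tfrac{1}{2}(\bm{I}+\bm{P})$; both devices serve the identical purpose of forcing nonnegative eigenvalues, and the remainder of the argument matches.
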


The next lemma, which is a consequence of Theorem~1 from \cite{daskalakis2018distribution} (Also stated in \cite{daskalakis2017testing}), bounds the number of samples required to distinguish two distributions over the same support given a lower bound on their Hellinger distance.

\begin{lemma}
  \label{lem:idtst}
  Given a discrete distribution $p$ on $[n]$ and given access to i.i.d samples from a distribution $q$ with the same support, there is a tester which can distinguish whether $p = q$ or $\dhel(p, q) \geq \epsilon$ with $O(\frac{\sqrt{n}}{\epsilon^2} \log 1 / \delta)$ samples and failure probability at most $\delta$.
\end{lemma}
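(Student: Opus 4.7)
The plan is to obtain the stated lemma by standard probability amplification applied to the constant-failure-probability identity tester of \cite{daskalakis2018distribution}. That cited Theorem~1 yields, using $O(\sqrt{n}/\epsilon^2)$ i.i.d.\ samples from $q$, a binary tester that distinguishes $p = q$ from $\dhel(p,q) \geq \epsilon$ with some fixed failure probability, say $1/3$. The high-level claim is then that we can boost $1/3$ down to an arbitrary $\delta$ by running the base tester $k = \Theta(\log(1/\delta))$ times independently on disjoint fresh sample batches and taking a majority vote, paying only a multiplicative $\log(1/\delta)$ factor in the sample complexity.

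Concretely, I would execute three steps. First, invoke the cited theorem once to fix the per-round tester and record that each invocation requires only $O(\sqrt{n}/\epsilon^2)$ samples to achieve failure probability at most $1/3$. Second, draw $k = \Theta(\log(1/\delta))$ disjoint sample blocks of this size and run the base tester on each; independence across blocks follows from the product structure of the i.i.d.\ sample, and the success probability on each block is at least $2/3$. Third, apply a Chernoff bound to the $k$ independent Bernoulli indicators ``round $i$ is wrong,'' each with mean at most $1/3 < 1/2$; this yields $\Pr[\text{majority wrong}] \leq \exp(-\Omega(k)) \leq \delta$ for the chosen $k$. Summing the per-round sample costs gives the claimed $O(\sqrt{n} \log(1/\delta)/\epsilon^2)$ bound.

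The main (and really only) subtlety is confirming that the cited base tester is stated directly in terms of Hellinger distance (or equivalently $\chi^2$, via the standard inequality $\dhel^2 \leq \chi^2/4$). If the base tester were instead stated in total variation, the conversion $\dtv \geq \dhel^2$ would degrade the per-round sample complexity to $O(\sqrt{n}/\epsilon^4)$, breaking the claimed rate. Modulo this routine bookkeeping and the standard sub-exponential tail for independent bounded Bernoullis, no new ideas are needed beyond the amplification argument sketched above.
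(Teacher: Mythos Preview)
Your proposal is correct and is precisely the standard amplification argument one would expect here. The paper itself does not give a proof of this lemma at all: it simply states that the lemma ``is a consequence of Theorem~1 from \cite{daskalakis2018distribution} (also stated in \cite{daskalakis2017testing})'' and moves on. Your majority-vote-over-$\Theta(\log(1/\delta))$-independent-runs reduction is exactly the routine derivation the paper is leaving implicit, and your caveat about needing the base tester to be stated for Hellinger (or $\chi^2$) rather than total variation is the right thing to flag.
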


In the next lemma, we show how the expansion properties of the ``low-information'' set obtained before can be used to obtain a guarantee on the number of samples observed from the ``High-information'' sets. To prove the below bound, we bound the spectral norm of $\bm{P}_T$ which controls the amount of time needed to escape from the set $T$. Our proof mirrors that of Lemma~3.3 in \cite{sinclair1989approximate} but we bound the first eigenvalue of a sub-matrix as opposed to the second eigenvalue of the whole transition matrix. The full details of the proof are deferred to Appendix~\ref{prf:tli}.

\begin{lemma}
  \label{lem:tli}
  Let $\bm{P}$ be the transition matrix of a symmetric markov chain. Furthermore, let $T\subset [n]$ be such that:
  \begin{equation*}
    \forall R \subseteq T,\ \frac{\sum_{i \in R, j \in \bar{R}} \bm{P}_{ij}}{\abs{R}} \geq \alpha
  \end{equation*}
  Then, in a word of length $l \geq 16\frac{1}{\alpha^2} \log (n) \log (1 / \delta)$, we have:
  \begin{equation*}
    \sum_{i = 1}^l \bm{1}\{X_i \notin T\} \geq \frac{l}{8 \log n \alpha^2}
  \end{equation*}
  with probability at least $1 - \delta$.
\end{lemma}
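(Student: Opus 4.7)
The plan is to prove the lemma by first establishing a Cheeger-type spectral bound on the sub-stochastic principal submatrix $\bm{P}_T$, and then converting that bound into a high-probability guarantee on the number of times the trajectory leaves $T$.

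For the spectral step I let $v \in \mb{R}^T$ be the (non-negative) Perron eigenvector of $\bm{P}_T$ with eigenvalue $\rho(\bm{P}_T)$, and let $\tilde v \in \mb{R}^n$ be the extension of $v$ by zero outside $T$. A short computation using both the symmetry of $\bm{P}$ and the stochasticity identity $\sum_{j \in T} \bm{P}_{ij} = 1 - \sum_{j \notin T} \bm{P}_{ij}$ for $i \in T$ gives
\begin{equation*}
  \sum_{i, j \in [n]} \bm{P}_{ij} (\tilde v_i - \tilde v_j)^2 \;=\; 2\, v^T (I - \bm{P}_T)\, v,
\end{equation*}
so the $T$-to-$\bar T$ transitions are absorbed into a genuine Dirichlet form on all of $[n]$. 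I then run the Cheeger argument of \cite{sinclair1989approximate} on $\tilde v$: Cauchy--Schwarz applied to $(\tilde v_i - \tilde v_j)(\tilde v_i + \tilde v_j) = \tilde v_i^2 - \tilde v_j^2$, the bound $\sum_{i,j} \bm{P}_{ij} (\tilde v_i + \tilde v_j)^2 \leq 4 \norm{v}_2^2$, and the layer-cake identity $\sum_{i,j} \bm{P}_{ij}\abs{\tilde v_i^2 - \tilde v_j^2} = \int_0^\infty 4t \lprp{\sum_{i \in R_t, j \notin R_t} \bm{P}_{ij}} dt$ with level sets $R_t = \{i : \tilde v_i \geq t\} \subseteq T$. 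Since the hypothesis supplies $\sum_{i \in R_t, j \notin R_t} \bm{P}_{ij} \geq \alpha \abs{R_t}$ for every such $R_t$, the chain of inequalities closes to give $1 - \rho(\bm{P}_T) \geq \alpha^2/2$.

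Next I use this spectral gap to control escape from $T$. For any starting state $x_0 \in T$, the probability of remaining in $T$ for the next $B$ steps equals $(\bm{P}_T^B \bm{1}_T)_{x_0}$, hence is bounded by $\rho(\bm{P}_T)^B \sqrt{\abs{T}} \leq (1 - \alpha^2/2)^B \sqrt n$; setting $B = \Theta(\log n / \alpha^2)$ makes this at most $1/n$. I then cut the trajectory into $\lfloor l/B \rfloor$ disjoint blocks of length $B$. For any block, regardless of history, the conditional probability that some step inside the block lies outside $T$ is at least $1 - 1/n$ (the bound is trivial if the block starts in $\bar T$, and follows from the spectral estimate otherwise), so the count of ``good'' blocks stochastically dominates a $\mathrm{Binomial}(\lfloor l/B \rfloor, 1 - 1/n)$ variable. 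A standard Chernoff bound then yields at least $l/(2B)$ good blocks with probability $\geq 1 - \delta$ once $l/B = \Omega(\log(1/\delta))$, which is exactly the assumed lower bound $l \geq 16 \log n \log(1/\delta)/\alpha^2$. Each good block contributes at least one visit to $\bar T$, giving the claimed lower bound on $\sum_{i=1}^l \bm{1}\{X_i \notin T\}$ up to absolute constants.

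The main obstacle is the first step. The standard Sinclair--Jerrum argument bounds the \emph{second} eigenvalue of a stochastic matrix and critically uses that its Perron eigenvector is the all-ones vector, which is subtracted off before applying Rayleigh--Cauchy--Schwarz. Here we want the \emph{top} eigenvalue of a sub-stochastic matrix and there is no such top mode to remove. The conceptual trick is to view each transition from $T$ to $\bar T$ as a boundary edge of the level set $R_t$ of a test vector extended by zeros, which slots the problem back into the classical Cheeger framework on $[n]$. The hypothesis of the lemma, which uses the stronger denominator $\abs{R}$ rather than $\min(\abs{R}, \abs{\bar R})$, is exactly what the calculation needs: every level set $R_t$ is contained in $T$, so the expansion bound applies to it directly with no loss, and the Cauchy--Schwarz plus layer-cake chain goes through essentially verbatim.
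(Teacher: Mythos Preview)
Your proposal is correct and follows essentially the same route as the paper: the paper also proves the spectral bound $\norm{\bm{P}_T} \leq 1 - \alpha^2/2$ by a Sinclair--Jerrum/Cheeger argument applied to the Perron eigenvector of $\bm{P}_T$ extended by zeros (the paper collapses $\bar{T}$ to a single auxiliary state rather than keeping all of $[n]$, but this is cosmetic), and then cuts the trajectory into blocks of length $\Theta(\log n / \alpha^2)$, bounds the conditional probability that a block stays entirely in $T$ via $e_x^\top \bm{P}_T^{k-1}\bm{1} \leq \sqrt{n}\,\norm{\bm{P}_T}^{k-1}$, and finishes with Hoeffding. The only differences are constants (the paper uses a per-block success probability of $1/2$ where you use $1 - 1/n$) and the name of the concentration inequality invoked.
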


The next lemma from \cite{daskalakis2017testing} lower bounds the number of times we observe a certain state in a suitably long trajectory of a markov chain. We will use the lemma below for sub-chains consisting of chains corresponding to the ``high-information'' sets.
\begin{lemma}
  \label{lem:mxtb}
  Let $X_1, \dots, X_m$ be a word of length $m$ from an irreducible markov chain, over state space $[n]$ and transition matrix $\bm{P}$. Then for $m \geq \ot(\hitt (\bm{P})\log \hitt (\bm{p}))$, we have:
  \begin{equation*}
    \bm{P} \lbrb{\exists i: \abs*{\{t: X_t = i\}} \leq \frac{m}{8en}} \leq \frac{\epsilon^2}{n}
  \end{equation*}
  where the probability is over the sampling of $X_1, \dots, X_m$.
\end{lemma}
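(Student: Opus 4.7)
The plan is to prove the bound for a fixed state $i \in [n]$ with failure probability at most $\epsilon^2 / n^2$, and then take a union bound over the $n$ states. Since $\bm{P}$ is symmetric and irreducible, its stationary distribution is uniform with mass $1/n$ on each state, so the expected number of visits to $i$ in a well-mixed trajectory of length $m$ is roughly $m/n$, and the target bound $m/(8en)$ is exactly this expectation up to a constant factor. The technical core is upgrading this expectation statement to a high-probability lower bound despite the strong correlations among the indicators $\mathbf{1}\{X_t = i\}$.

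The cleanest route is a regeneration argument. I would split the trajectory at successive return times to state $i$ and denote the inter-return intervals by $\tau_1, \tau_2, \ldots$. By the strong Markov property these are i.i.d., with mean $1/\pi_i = n$ for the symmetric chain, and their tails are controlled by the hitting time: iterating Markov's inequality on blocks of length $2\hitt(\bm{P})$ gives $\mathbb{P}[\tau_k > 2j\hitt(\bm{P})] \leq 2^{-j}$, so each $\tau_k$ is sub-exponential with scale $O(\hitt(\bm{P}))$. Applying a sub-exponential Bernstein inequality to $\sum_{k=1}^{K} \tau_k$ with $K = m/(8en)$ then shows that this sum is at most $m$ with failure probability $\exp(-\Omega(m/\hitt(\bm{P})))$; plugging in $m \geq \ot(\hitt(\bm{P}) \log \hitt(\bm{P}))$ makes this at most $\epsilon^2/n^2$, which is exactly what the union bound needs.

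The main obstacle is the correlation between successive samples, which rules out applying Chernoff directly to the indicators $\mathbf{1}\{X_t = i\}$. The regeneration decomposition sidesteps this by reducing to i.i.d.\ interarrival times, but it leans on a sub-exponential tail for the return times, which is precisely where the hitting time enters and which explains the length requirement $m \geq \ot(\hitt(\bm{P}) \log \hitt(\bm{P}))$ in the statement. A secondary subtlety is that the first return interval depends on the (arbitrary) starting state and may be larger than subsequent ones, but its expectation is still at most $\hitt(\bm{P})$ and can be absorbed as an additive term in the concentration bound. The detailed calculation is carried out in \cite{daskalakis2017testing}, from which this lemma is quoted.
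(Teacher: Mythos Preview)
The paper does not prove this lemma itself; it is quoted from \cite{daskalakis2017testing}, exactly as you note in your last sentence, so there is no in-paper argument to compare your sketch against. Your regeneration-based outline (i.i.d.\ return times with mean $n$ and sub-exponential tails on the scale of $\hitt(\bm{P})$, then Bernstein plus a union bound) is the standard route to such a visit-count lower bound and is consistent with how the result is used here.
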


\subsection{Sample Generation Phase}

In this subsection, we will state and prove key lemmas relating to the sample generation phase of the algorithm. Here, we will assume that the observed word $\bm{w}$ is a subset of an infinite word $\bm{w}_\infty$ from a markov process with the same starting distribution and transition matrix. We will first analyze the sample generation process on the infinite word $\bm{w}_\infty$. Assuming that we have access to the infinite word $\bm{w}_\infty$, we see that the sample generation process will never fail as we see each state infinitely many times with probability $1$. In the first lemma, we show that given access to $\bm{w}_\infty$, we will be able to use any of the ``high-information'' sets to test between the two chains:

\begin{lemma}
  \label{lem:infFail}
  Suppose $(\mathcal{S}, T)$ is a decomposition of a markov chain $\bm{P}$ obtained from Algorithm~\ref{alg:idtst} and that we are given an infinite word $\bm{w}_\infty$ from a markov process with transition matrix $\bm{Q}$ and we are guaranteed one of the following two cases:
  \begin{equation*}
    \text{Case 1: } \distt (\bm{P}, \bm{Q}) \geq \epsilon \qquad \text{Case 2: } \bm{P} = \bm{Q}
  \end{equation*}
  Now, for each set $S \in \mathcal{S}$, let $l_S = \omt (\abs{S} / \epsilon^2)$, let $\mathcal{R}_S = \giid (\bm{w}_\infty, S, l_S)$. Then, we have:

  \begin{equation*}
    \mb{P} \lbrb{\exists S \in \mathcal{S}: \itest (\mathcal{R}_S, \distt (S, \bm{P}), \epsilon^2 / 32) \neq \bm{1} \lbrb{\bm{P} = \bm{Q}}} \leq \frac{1}{10}
  \end{equation*}
\end{lemma}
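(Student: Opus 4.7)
The plan is to reduce each ``high-information'' set $S \in \mathcal{S}$ to an instance of iid identity testing and then take a union bound over $\mathcal{S}$. First, I would establish that $\mathcal{R}_S$ consists of $l_S$ i.i.d.\ samples from $\distt(S, \bm{Q})$. Because we operate on the infinite word $\bm{w}_\infty$, every state of $S$ is visited infinitely often almost surely, so $\giid$ never returns \textbf{False}. Conditional on the multiplicities $\{r_i\}_{i \in S}$ drawn from $\text{Uniform}(S)$, the markov property of $\bm{Q}$ implies that the first $r_i$ transitions out of state $i$ in $\bm{w}_\infty$ are independent draws from $\bm{Q}_{i,\cdot}$, and these are mutually independent across distinct $i$. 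Each transition is recorded as $(i, w_{t+1})$ when $w_{t+1} \in S$ and as $\nue$ otherwise, so a single sample has distribution $\distt(S, \bm{Q})$; marginalizing over $\{r_i\}$ preserves both the identical marginal and the independence across samples.

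Next I would lower bound $\dhel(\distt(S, \bm{P}), \distt(S, \bm{Q}))$ in Case~1. Since the partition is produced by $\pg(\bm{P}, \epsilon/16)$, Claim~1 of Lemma~\ref{lem:pg} applied with $\beta = \epsilon/16$ supplies exactly the hypothesis $\sum_{i,j \in S}\bm{P}_{ij}/\abs{S} \geq 1 - \epsilon/16$ required by Lemma~\ref{lem:helbnd}, yielding
\[
\dhel^2(\distt(S, \bm{P}), \distt(S, \bm{Q})) \geq \frac{\epsilon^2}{32}
\]
for every $S \in \mathcal{S}$. In Case~2 the two distributions are trivially identical. I would then invoke Lemma~\ref{lem:idtst} on each $S$: the reference distribution $\distt(S, \bm{P})$ has support of size $\abs{S}^2 + 1$ and in Case~1 differs from the observed distribution by Hellinger distance at least $\epsilon/\sqrt{32}$, so the tester correctly classifies with failure probability at most $\delta$ from $O(\abs{S}\log(1/\delta)/\epsilon^2)$ samples. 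Taking $\delta = 1/(10\abs{\mathcal{S}}) \geq 1/(10n)$ costs an extra $\log n$ factor which is absorbed into the $\omt(\abs{S}/\epsilon^2)$ budget of $l_S$, and a union bound over $\mathcal{S}$ yields overall failure probability at most $1/10$.

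The main obstacle will be rigorously establishing the i.i.d.\ claim in the first step: one must verify that a single sample drawn by $\giid$ (which first fixes a uniformly random state in $S$ and then reads off the next markov transition from $\bm{w}_\infty$) genuinely follows $\distt(S, \bm{Q})$, and that the recorded samples are mutually independent despite being extracted from a single correlated trajectory. This is precisely where the markov property of $\bm{Q}$ is used, together with the observation that transitions out of distinct states occur at disjoint positions of the word and hence the extractions do not interfere. The remaining pieces (Lemmas~\ref{lem:helbnd} and~\ref{lem:idtst}, and the partitioning guarantee from Lemma~\ref{lem:pg}) then plug in directly to give the stated bound.
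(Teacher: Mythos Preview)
Your proposal is correct and follows essentially the same approach as the paper: fix a single $S\in\mathcal{S}$, argue that $\mathcal{R}_S$ is an i.i.d.\ sample from $\distt(S,\bm{Q})$, invoke Lemma~\ref{lem:helbnd} (via the partition guarantee of Lemma~\ref{lem:pg}) to get the Hellinger gap in Case~1, apply the tester of Lemma~\ref{lem:idtst} with $\delta=1/(10n)$, and union-bound over the at most $n$ sets in $\mathcal{S}$. If anything, you supply more justification than the paper does for the i.i.d.\ step and for why the hypothesis of Lemma~\ref{lem:helbnd} holds.
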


\begin{proof}
  We will first consider a single set $S \in \mathcal{S}$. In the case that $\bm{P} = \bm{Q}$, we have that $\mathcal{R}_S$ consists of $l_S$ samples from $\distt(S, \bm{P})$. Therefore, we have from the guarantees of $\itest$ from Theorem~\ref{lem:idtst} that

  \begin{equation*}
    \mb{P} \lbrb{\itest(\mathcal{R}_S, \distt(S, \bm{P})) = 1} \geq 1 - \frac{1}{10n}
  \end{equation*}

  In the alternate case where $\distt(\bm{P}, \bm{Q}) \geq \epsilon$, we have from Lemma~\ref{lem:helbnd} that $\dhel^2 (\distt(S, \bm{P}), \distt(S, \bm{Q})) \geq \epsilon^2 / 32$. Therefore, we have again from Lemma~\ref{lem:idtst}:

  \begin{equation*}
    \mb{P} \lbrb{\itest(\mathcal{R}_S, \distt(S, \bm{P})) = 0} \geq 1 - \frac{1}{10n}
  \end{equation*}

  The above two inequalities imply that for a fixed $S \in \mathcal{S}$, we have:
  \begin{equation*}
    \mb{P} \lbrb{\itest(\mathcal{R}_S, \distt(S, \bm{P})) = \bm{1} \lbrb{\bm{P} = \bm{Q}}} \geq 1 - \frac{1}{10n}
  \end{equation*}

  We note that since each $S \in \mathcal{S}$ is non-empty and along with $T$, they form a partition of $[n]$, there are at most $n$ sets in $\mathcal{S}$. Taking an union bound over the at most $n$ sets in $\mathcal{S}$, we get:

  \begin{equation*}
    \mb{P} \lbrb{\exists S \in \mathcal{S}: \itest (\mathcal{R}_S, \distt (S, \bm{P}), \epsilon^2 / 32) \neq \bm{1} \lbrb{\bm{P} = \bm{Q}}} \leq \frac{1}{10}
  \end{equation*}
\end{proof}

The above lemma shows that if we are able to generate samples from even one of the subsets $S \in \mathcal{S}$, we will be able to correctly answer the identity testing problem with high confidence. Therefore, to ensure the correctness of Algorithm~\ref{alg:idtst}, we simply need to show that the probability of being able to generate enough samples from the distribution corresponding to at least one of the sets $S \in \mathcal{S}$ is large. The next lemma, proved in the Appendix~\ref{prf:mxk}, is used to bound the number of times we will sample a particular state in the running of Algorithm~\ref{alg:geniid}.

\begin{lemma}
  \label{lem:mxk}
  Let $X_1, \dots, X_m$ be $m$ iid samples from $\text{Uniform}([k])$. Let $\bm{v} = \text{Histogram} (X_1, \dots, X_m)$. Suppose further that $m \geq 10 k \log (n / \epsilon)$ for some $n > k$. Then, we have:
  \begin{equation*}
    \max_{i \in [k]} v_i \leq 2 \frac{m}{k}
  \end{equation*}
  with probability at least $1 - \frac{\epsilon}{n^2}$.
\end{lemma}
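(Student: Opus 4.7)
The plan is to apply a standard multiplicative Chernoff bound to each bin count and then union bound over the $k$ bins. For a fixed $i \in [k]$, the count $v_i$ is a sum of $m$ independent Bernoulli random variables with parameter $1/k$, so $\mathbb{E}[v_i] = m/k$. I would invoke the multiplicative Chernoff inequality in the form $\mathbb{P}[X \geq (1 + \delta)\mu] \leq \exp(-\delta^2 \mu / (2 + \delta))$ with $\delta = 1$ and $\mu = m/k$, which yields
\begin{equation*}
  \mathbb{P}\lbrb{v_i \geq 2 \cdot \frac{m}{k}} \leq \exp\lprp{-\frac{m}{3k}}.
\end{equation*}

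Next I would plug in the hypothesis $m \geq 10 k \log(n/\epsilon)$, which gives $m/(3k) \geq (10/3) \log(n/\epsilon) \geq 3 \log(n/\epsilon)$, so each tail is bounded by $(\epsilon/n)^3$. A union bound over the $k \leq n$ bins then yields
\begin{equation*}
  \mathbb{P}\lbrb{\max_{i \in [k]} v_i \geq 2 \cdot \frac{m}{k}} \leq k \cdot \lprp{\frac{\epsilon}{n}}^3 \leq n \cdot \frac{\epsilon^3}{n^3} = \frac{\epsilon^3}{n^2} \leq \frac{\epsilon}{n^2},
\end{equation*}
where the last inequality uses $\epsilon \leq 1$ (which is implicit, since otherwise the hypothesis is vacuous for testing).

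There is no real obstacle here; the only things to be slightly careful about are the exact constant in the Chernoff exponent (so that $10$ in the hypothesis is enough to drive the per-bin failure probability below $(\epsilon/n)^3$), and the use of $k \leq n$ in absorbing the union-bound factor. Both are routine, so the entire proof should fit in a few lines once the Chernoff statement is quoted.
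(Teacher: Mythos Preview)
Your proposal is correct and matches the paper's proof essentially line for line: fix a bin, apply the multiplicative Chernoff bound (the paper cites Theorem~1.1 of Dubhashi--Panconesi) to get $\mb{P}\{v_i \geq 2m/k\} \leq \exp(-m/(3k)) \leq (\epsilon/n)^3$, and then union bound over the $k < n$ bins to obtain $\epsilon/n^2$. The only cosmetic difference is that you spell out the intermediate steps $k(\epsilon/n)^3 \leq \epsilon^3/n^2 \leq \epsilon/n^2$ and the implicit assumption $\epsilon \leq 1$, which the paper leaves tacit.
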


In the following lemma, we show that the number of samples in a trajectory from $S \in \mathcal{S}$ we will need to observe to generate $l_S$ samples from $\distt(S, \bm{P})$ is small.

\begin{lemma}
  \label{lem:subMcSamps}
  Suppose $(\mathcal{S}, T)$ is a decomposition of a markov chain $\bm{P}$ obtained in Algorithm~\ref{alg:idtst} and that $\bm{w}_\infty$ is an infinite length trajectory from a markov process with transition matrix $\bm{P}$. Now for each $S \in \mathcal{S}$, let $l_S = \ot(\abs{S} / \epsilon^2)$ and let $w_{\tau^S_1}, w_{\tau^S_2}, \dots , w_{\tau^S_{N_S}}$ be the indices corresponding to the entries in $S$ encountered in the running of $\giid(\bm{w}_\infty, S, l_S)$. Then we have:

  \begin{equation*}
    \mb{P} \lbrb{\forall S \in \mathcal{S}: N_S \leq \ot (\abs{S} / \epsilon^2)} \geq \frac{9}{10}
  \end{equation*}
\end{lemma}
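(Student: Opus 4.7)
The plan is to reduce the question about $N_S$ to a hitting-time statement on the sub-chain observed on $S$, and then to combine three concentration ingredients already assembled in the preliminaries. The key observation is that, in the language of Definition~\ref{def:mcsub}, recording $\bm{w}_\infty$ only at its visits to $S$ yields a Markov process $\bm{X}^S$, and $N_S$ is exactly the length of trajectory of $\bm{X}^S$ needed before every state $j \in S$ has appeared at least $r_j$ times, where $\bm{r}$ is the histogram of the $l_S$ uniform samples drawn at the start of $\giid$.

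First I would set up the sub-chain. Lemma~\ref{lem:submc} ensures that $\bm{X}^S$ is itself a symmetric Markov chain, with some transition matrix $\bm{Q}_S$. Claim~2 of Lemma~\ref{lem:pg} gives internal expansion $\Omega(\epsilon/\log^2 n)$ for $S$ in $\bm{P}$, which lifts via Corollary~\ref{cor:expsmc} to $\chg(\bm{Q}_S) = \Omega(\epsilon/\log^2 n)$, and plugging this into Lemma~\ref{lem:htb} yields $\hitt(\bm{Q}_S) \leq \ot(\abs{S}/\epsilon^2)$.

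Next I would control the two sources of randomness in turn. For the uniform samples, Lemma~\ref{lem:mxk} applied with $k = \abs{S}$ and $m = l_S$ gives $\max_j r_j \leq 2 l_S / \abs{S}$ with probability at least $1 - 1/(10 n^2)$, once the constants hidden in the $\ot$ notation are chosen appropriately. For the sub-chain, applying Lemma~\ref{lem:mxtb} to $\bm{X}^S$ with trajectory length $N_S^{\ast} = \ot(\abs{S}/\epsilon^2)$, chosen so that $N_S^{\ast} \geq \ot(\hitt(\bm{Q}_S)\log \hitt(\bm{Q}_S))$ and $N_S^{\ast}/(8 e \abs{S}) \geq 2 l_S/\abs{S}$, guarantees with probability at least $1 - 1/(10 n)$ that every state of $S$ is visited at least $2 l_S/\abs{S}$ times within $N_S^{\ast}$ steps of $\bm{X}^S$. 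On the intersection of these two events, $N_S \leq N_S^{\ast} = \ot(\abs{S}/\epsilon^2)$. A final union bound over the at most $n$ sets in $\mathcal{S}$ pushes the total failure probability below $1/10$.

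The main obstacle is conceptual rather than computational: one has to recognize that the expansion guarantee of Lemma~\ref{lem:pg} only becomes useful after being transported, via Lemma~\ref{lem:submc} and Corollary~\ref{cor:expsmc}, into a Cheeger bound on the sub-chain $\bm{Q}_S$, since it is that sub-chain (and not the original chain on $[n]$) whose hitting time actually governs $N_S$. The remaining technical care goes into balancing the logarithmic factors absorbed by $\ot$ so that the three failure probabilities above combine cleanly to the stated $1/10$.
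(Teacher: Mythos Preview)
Your proposal is correct and follows essentially the same argument as the paper: it too passes to the sub-chain on $S$ via Lemma~\ref{lem:submc}, combines Lemma~\ref{lem:pg} with Corollary~\ref{cor:expsmc} and Lemma~\ref{lem:htb} to get $\hitt(\bm{Q}_S)=\ot(\abs{S}/\epsilon^2)$, then invokes Lemmas~\ref{lem:mxtb} and~\ref{lem:mxk} on the two sources of randomness before a union bound over the at most $n$ sets in $\mathcal{S}$.
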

\begin{proof}
  As in the proof of Lemma~\ref{lem:infFail}, we first consider a single component $S \in \mathcal{S}$. Note that the trajectory $\bm{w}_\infty$ observed on the set of states in $S$, $\bm{w}^S_\infty$, is also a markov process. Furthermore, we know from Lemmas~\ref{lem:pg}, \ref{lem:htb} and Corollary~\ref{cor:expsmc} that the hitting time of $\bm{w}^S_\infty$ is $\ot (\abs{S} / \epsilon^2)$. Therefore, we have from Lemma~\ref{lem:mxtb}, that in a trajectory of length $N_S$ from $\bm{w}^S_\infty$, we have:

  \begin{equation*}
    \mb{P} \lbrb{\exists i: \abs*{\{t: X_t = i\}} \leq \frac{N_S}{8e\abs{S}}} \leq \frac{1}{20 n}
  \end{equation*}

  Similarly, to generate $l_S$ samples from $\distt(S, \bm{P})$, the maximum number of times any a particular state in $S$ will be sampled in a run of Algorithm~\ref{alg:geniid}, denoted by $m_S$, is upper bounded by Lemma~\ref{lem:mxk}:

  \begin{equation*}
    \mb{P} \lbrb{m_S \geq 2 \frac{l_S}{\abs{S}}} \leq \frac{1}{20n}
  \end{equation*}

  Therefore, the probability that we succeed in generating $l_S$ samples from $\distt(S, \bm{P})$ is upper bounded by the probability that both the above events fail to occur as this implies the event ${\{\forall i: \abs{\{t: X_t = i\}} \geq m_S\}}$ ensuring the sample generation process succeeds. Therefore, we have:
  \begin{equation*}
    \mb{P} \lbrb{N_S \geq \ot (\abs{S} / \epsilon^2)} \leq \frac{1}{10n}
  \end{equation*}

  By taking a union bound over the at most $n$ subsets $S \in \mathcal{S}$, we get:
  \begin{equation*}
    \mb{P} \lbrb{\forall S \in \mathcal{S}: N_S \leq \ot (\abs{S} / \epsilon^2)} \geq \frac{9}{10}
  \end{equation*}
\end{proof}

\subsection{Proof of Theorem~\ref{thm:mainth}}

We are now ready to prove Theorem~\ref{thm:mainth}. We will prove the theorem in two cases: 

\textbf{Case 1: } $\bm{P} = \bm{Q}$. In this case, we see that the Algorithm~\ref{alg:idtst} only outputs the wrong answer if the sample generation process in Algorithm~\ref{alg:geniid} fails for all subsets $S \in \mathcal{S}$ or if the sample generation process succeeds but \itest{} returns the wrong answer. We will first upper bound the probability that the sample generation process fails. To do this, we see from Lemma~\ref{lem:tli} that if we have a trajectory of length $m \geq \omt(m / \epsilon^4)$, then we have:

\begin{equation*}
  \sum_{i = 1}^m \bm{1} \lbrb{X_i \notin T} \geq \omt \lprp{\frac{n}{\epsilon^2}}
\end{equation*}

with probability at least $0.9$. Therefore, we have with probability at least $0.9$, there exists at least one set $S \in \mathcal{S}$:

\begin{equation*}
  \sum_{i = 1}^m \bm{1} \lbrb{X_i \in S} \geq \omt \lprp{\frac{\abs{S}}{\epsilon^2}}
\end{equation*}

Therefore, the probability that the sample generation process fails is at most:

\begin{equation*}
  \mb{P} \lbrb{\exists S \in \mathcal{S}: N_S \geq \omt\lprp{\frac{\abs{S}}{\epsilon^2}}} + \mb{P} \lbrb{\forall S \in \mathcal{S}: \sum_{i = 1}^m \bm{1} \lbrb{X_i \in S} \leq \omt \lprp{\frac{\abs{S}}{\epsilon^2}}} \leq \frac{2}{10}
\end{equation*}

where $N_S$ and the bound on the first term are from Lemma~\ref{lem:subMcSamps}. We finally bound the probability of failure of the algorithm by the sum of the probabilities of the sample generation process failing and the probability of the \itest{} failing on samples generated from the infinite word, $\bm{w}_\infty$ from Lemma~\ref{lem:infFail}. Putting the two bounds together, we get that Algorithm~\ref{alg:idtst} fails with probability at most $0.7$ which is less than $2 / 3$.

\textbf{Case 2: } $\distt(\bm{P}, \bm{Q}) \geq \epsilon$. In this case, we see that the Algorithm~\ref{alg:idtst} always returns the correct answer if the sample generation process fails. Therefore, the probability of failure is at most the probability that \itest{} failing on samples generated from the infinite word from $\bm{Q}$. From Lemma~\ref{lem:infFail}, we know that this is at most $0.1$. Therefore, the failure probability of the Algorithm in this case is at most $0.9$ which is less than $2/3$.

The above two cases conclude the proof of the theorem.

\qed
\section{Conclusion}
\label{sec:conc}

We have presented an algorithm for identity testing of markov chains which avoids any dependence on brittle connectivity properties like the hitting time resolving a open question from \cite{daskalakis2017testing}. However, there are several open questions potentially relating to identity testing and graph partitioning arising from this work:

\begin{enumerate}
    \item The sample complexity of our approach $\ot(n / \epsilon^4)$ is sub-optimal in its dependence on the error parameter $\epsilon$. Can our approach be improved to the $\Omega (n / \epsilon)$ lower bound for the problem established in \cite{daskalakis2017testing}. 
    \item One reason for this dependence on $\epsilon$ is due to the graph partitioning algorithm which guarantees sets of low expansion. Is it possible to improve upon such graph partitioning algorithms or devise new graph partitioning algorithms to achieve improved error dependence?
    \item Markov chains are arguably the simplest possible model for sequential data analysis. How can we quantify distances between models for more complicated methods? What assumptions does one need to place on the model to ensure that statistical and computational efficiency is possible for such hypothesis testing tasks?
\end{enumerate}
\bibliography{refs}
\bibliographystyle{alpha}
\newpage
\appendix
\section{Decomposing a Markov Chain into Well Connected Components}
\label{sec:mcd}
\subsection{Sparsest Cut with Component Constraints}

In this section, we will design and analyze an algorithm for decomposing the state space of a Markov Chain into components that are internally well connected but poorly connected to the rest of the state space. Our algorithm is based on generalizations to the classical linear programming relaxations of the Sparsest Cut problem which is known to be NP-Hard in general. We will start by stating some classical results used to analyze such relaxations and adapt them to our setting. Our first result is Bourgain's famous metric-embedding theorem:

\begin{theorem}[\cite{bourgain1985lipschitz,linial1995geometry}]
  \label{thm:bme}
  Let $\mathcal{X}$ be a finite metric space of size $n$ endowed with a metric $d$. Then, there exists a function $f: \mathcal{X} \rightarrow \mb{R}^m$ and a constant $C > 0$ such that:

  \begin{equation*}
    \forall x,y \in \mathcal{X},\ d(x,y) \leq \norm{f(x) - f(y)}_1 \leq C \log n\, d(x,y)
  \end{equation*}

  And furthermore, $m$ is at most $O(\log^2 n)$ and can be found in randomized polynomial time.
\end{theorem}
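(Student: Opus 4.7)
The plan is to follow Bourgain's classical argument, building the embedding from a Fréchet-style map whose coordinates record distances to random subsets of $\mathcal{X}$. For each scale $i = 1, 2, \ldots, \lfloor \log_2 n \rfloor$ and each repetition $j = 1, \ldots, k$ with $k = \Theta(\log n)$, I would draw an independent set $A_{i,j} \subseteq \mathcal{X}$ by including every point of $\mathcal{X}$ independently with probability $2^{-i}$, and then set $f(x) = \tfrac{1}{Ck}\bigl(d(x, A_{i,j})\bigr)_{i,j}$ where $C$ is an absolute constant to be tuned. The resulting dimension is $m = \Theta(\log^2 n)$, matching the bound in the statement, and the whole construction is clearly randomized polynomial time.

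The non-expansive (upper-bound) direction is a one-line Lipschitz calculation: for any $A \subseteq \mathcal{X}$ the map $x \mapsto d(x, A)$ is $1$-Lipschitz under $d$, so each unnormalized coordinate contributes at most $d(x,y)$. Summing over the $\Theta(\log^2 n)$ coordinates and applying the $1/(Ck)$ rescaling yields $\norm{f(x) - f(y)}_1 \leq C' \log n \cdot d(x,y)$ for some absolute constant $C'$, which is exactly the upper bound required.

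The real work is the lower bound (non-contraction). Fix $x, y \in \mathcal{X}$ and, following Bourgain, define for each integer $t \geq 0$ the critical radius $r_t$ as the smallest $r$ for which both balls $B(x, r)$ and $B(y, r)$ contain at least $2^t$ points of $\mathcal{X}$, truncating at the largest $t^* \leq \log_2 n$ with $r_{t^*} < d(x,y)/4$. The event to analyze at scale $i = t+1$ is that $A_{i,j}$ intersects the smaller of the two balls at radius $r_t$ (which occurs with constant probability because its size is at least $2^t$ and the inclusion probability is $2^{-(t+1)}$) while simultaneously missing the larger of the two balls at radius $r_{t+1}$ (which occurs with constant probability since that ball has at most $2^{t+1}$ points, giving expected intersection $O(1)$). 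When both occur, the two distances $d(x, A_{i,j})$ and $d(y, A_{i,j})$ are separated by at least $r_{t+1} - r_t$, and a symmetric case gives the same bound. Telescoping $r_{t+1} - r_t$ over $t < t^*$ yields an expected per-repetition contribution of $\Omega(d(x,y))$, and since $t^* = \Omega(\log n)$ the expected total unnormalized contribution across all scales and repetitions is $\Omega(k \cdot \log n \cdot d(x,y) / \log n) = \Omega(k \cdot d(x,y))$. Turning this expectation into a high-probability statement is then mechanical: a Chernoff bound over the $k = \Theta(\log n)$ independent repetitions at each scale, followed by a union bound over the $\binom{n}{2}$ pairs in $\mathcal{X}$, shows that for an appropriate choice of $C$ the normalized bound $\norm{f(x) - f(y)}_1 \geq d(x,y)$ holds simultaneously for every pair $x, y$ with constant probability.

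The main obstacle is the per-scale case analysis underlying the lower bound: one must show that at each scale there is a constant probability that $A_{i,j}$ hits one relevant ball and misses the other, and that the resulting telescoping sum over scales accumulates $\Omega(d(x,y))$ of separation. This is where the careful definition of the scale-dependent radii $r_t$ and the independence of the $A_{i,j}$ across scales are essential; by contrast, the upper bound and the concentration/union-bound wrap-up are routine.
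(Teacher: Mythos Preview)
The paper does not prove this theorem: it is stated as a classical result with citations to Bourgain and to Linial--London--Rabinovich, and is then invoked as a black box in the proofs of Lemma~\ref{lem:me} and Theorem~\ref{thm:spccc}. So there is no ``paper's own proof'' to compare against; your proposal is a self-contained sketch of the standard Bourgain/LLR argument, which is exactly what one would write if a proof were required.

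Your sketch is essentially correct and follows the standard route, but one sentence is garbled. You write ``since $t^* = \Omega(\log n)$'' --- this is false in general ($t^*$ can be as small as $0$ or $1$ depending on the metric), and fortunately your argument does not actually need it. The telescoping $\sum_{t < t^*} (r_{t+1} - r_t) = r_{t^*} - r_0 = \Omega(d(x,y))$ already gives the full $\Omega(d(x,y))$ contribution summed over \emph{all} scales in a single repetition, regardless of how many scales are active; multiplying by the $k$ repetitions yields the $\Omega(k\,d(x,y))$ you want directly. The spurious $\log n / \log n$ factor you insert and cancel is a symptom of this confusion. Once that sentence is cleaned up, the rest of the outline (Lipschitz upper bound, constant-probability hit/miss at each scale, Chernoff over repetitions, union bound over pairs) is the textbook argument and is fine.
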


We will now describe the linear programming relaxation to the Sparsest Cut problem. Before we describe the formulation, we first introduce the notion of a Cut Metric:

\begin{definition}[Cut Metric]
  \label{def:cutmet}
  For a state space $[n]$, the Cut Metric associated with a subset $S \subset [n]$ is defined as follows:

  \begin{equation*}
    \delta_S (i, j) = \begin{cases}
                        0, &\text{if $i,j \in S$ or $i,j \in \bar{S}$} \\
                        1, &\text{otherwise}
                      \end{cases}
  \end{equation*}
\end{definition}

It follows that the Cut Value of a subset can be restated in terms of the cut metric corresponding to the subset as follows:

\begin{equation*}
  g_{\bm{P}} (S) = \frac{\sum_{i,j \in [n]} \bm{P}_{ij} \delta_S (i,j)}{\sum_{i,j \in [n]} \delta_S (i,j)}
\end{equation*}

The Linear Programming relaxation to the Sparsest Cut problem, can now be seen naturally as broadening the class of metrics in the Sparsest Cut formulation from the set of cut metrics to the set of all metrics and is described below:

\begin{gather*}
  \min \sum_{i, j \in [n]} \delta_{ij} \bm{P}_{ij} \\
  \text{such that } \delta_{ii} = 0\, \forall i\\
  \delta_{ij} \leq \delta_{ik} + \delta_{kj} \, \forall i,j,k \\
  \sum_{i,j \in [n]} \delta_{ij} = 1 \\
  \delta_{ij} \geq 0 \label{eq:lpcut} \tag{\textbf{LP-CUT}}
\end{gather*}

where the second constraint is a normalization factor.

We will work with a natural variant of the sparsest cut problem where we are given a priori a subset $T$ of states all of which we require to be in the same component:

\begin{question}{Sparsest Cut with Component Constraints (\spccc):}
  \label{que:spccc}
  Given a non-negative matrix, $\bm{P}$ and a set of states $T$ that are all required to be in the same component, we define the Sparsest Cut Problem with Component Constraints as follows:

  \begin{equation*}
    S^* = \argmin_{T \subseteq S \subset [n]} \frac{\sum_{i, j \in [n]} \delta_S (i,j) \bm{P}_{ij}}{\sum_{i, j \in [n]} \delta_S (i,j)}
  \end{equation*}
\end{question}

Now, we give our Linear Programming relaxation of the \spccc{} problem. As for the Sparsest Cut problem, we relax the class of metrics beyond Cut Metrics, but we include the constraint that the distance between vertices in $T$ is $0$ and all the vertices in $T$ have the same distance to every other vertex:

\begin{gather*}
  \min \sum_{i, j \in [n]} \delta_{ij} \bm{P}_{ij} \\
  \text{such that } \delta_{ii} = 0\, \forall i\\
  \delta_{ij} \leq \delta_{ik} + \delta_{kj} \, \forall i,j,k \\
  \sum_{i,j \in [n]} \delta_{ij} = 1 \\
  \delta_{ij} \geq 0 \\
  \delta_{ij} = 0 \, \text{if $i,j \in T$} \\
  \delta_{ik} = \delta_{jk} \, \forall i,j \in T, k \in [n]\label{eq:lpccc} \tag{\textbf{LP-CCC}}
\end{gather*}

The last two constraints in the relaxation defined above ensure that there is no distance between any two states in $T$ and the distance from the states in $T$ to every other state is the same. We will now denote by $(\delta, v) = \lpccc(\bm{P}, T)$ a pair of metric $\delta$ and a value $v$ returned by \lpccc. We will now prove a lemma showing that the function $f$ guaranteed by Theorem~\ref{thm:bme} can be shown to have special structure.

\begin{lemma}
  \label{lem:me}
  Given an instance of the \spccc{} problem, $(\bm{P}, T)$ and solution $(\delta, v) = \lpccc(\bm{P}, T)$, there exists a function $f: V \rightarrow \mb{R}^m$ and a constant $C > 0$ such that:

  \begin{equation*}
    \text{Claim 1}: \delta_{ij} \leq \norm{f(i) - f(j)}_1 \leq C \log n\, \delta_{ij}, \qquad \text{Claim 2}: f(i) = f(j)\ \forall i,j \in T
  \end{equation*}

  Furthermore, $m$ is at most $O(\log^2 n)$ and $f$ can be found in randomized polynomial time.
\end{lemma}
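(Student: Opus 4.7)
The plan is to reduce to Bourgain's theorem (Theorem~\ref{thm:bme}) by quotienting $V=[n]$ so that all of $T$ collapses to a single point. By the last two constraints of \lpccc, $\delta_{ij}=0$ for $i,j\in T$ and $\delta_{ik}=\delta_{jk}$ for $i,j\in T$ and any $k\in[n]$, so the elements of $T$ are metrically indistinguishable from the perspective of $\delta$. This suggests $\delta$ should descend to a (pseudo-)metric on the smaller space $V' := (V\setminus T)\cup\{t^*\}$, where $t^*$ is a single new representative of $T$.

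First I would define $\delta'(i,j):=\delta_{ij}$ for $i,j\in V\setminus T$, $\delta'(t^*,j):=\delta_{tj}$ for any $t\in T$ (well-defined by the equality constraint), and $\delta'(t^*,t^*):=0$, and check that $\delta'$ is a metric on $V'$ of size at most $n$. Nonnegativity and symmetry are immediate; the only nontrivial triangle inequalities reduce to $\delta_{it}+\delta_{tj}\geq\delta_{ij}$ for $i,j\notin T$ and $t\in T$, which hold in $\delta$. Next, apply Theorem~\ref{thm:bme} to $(V',\delta')$ to obtain, in randomized polynomial time, a map $g:V'\to\mathbb{R}^m$ with $m=O(\log^2 n)$ satisfying $\delta'(x,y)\leq\|g(x)-g(y)\|_1\leq C\log n\cdot \delta'(x,y)$. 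Finally pull $g$ back by defining $f:V\to\mathbb{R}^m$ via $f(i):=g(i)$ for $i\notin T$ and $f(i):=g(t^*)$ for $i\in T$.

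Verifying the claims is then routine: Claim~2 holds by construction since every $i\in T$ is sent to $g(t^*)$; for Claim~1, the $i,j\in T$ case gives $0=\|f(i)-f(j)\|_1=\delta_{ij}$, the $i\in T,\,j\notin T$ case reduces to the guarantee for $g$ applied to the pair $(t^*,j)$ using $\delta'(t^*,j)=\delta_{ij}$, and the $i,j\notin T$ case transfers directly from the Bourgain bound. The only mild obstacle is that the LP solution $\delta$ is in general only a pseudometric (distinct vertices outside $T$ might have $\delta_{ij}=0$), so $\delta'$ is a pseudometric on $V'$; this is handled either by appealing to the standard extension of Bourgain's theorem to pseudometrics (vertices at zero distance simply receive identical images) or by first collapsing each zero-distance equivalence class and embedding the resulting honest metric space, which still has at most $n$ points and so yields the same $O(\log^2 n)$ dimension bound.
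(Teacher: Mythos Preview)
Your proof is correct, but it takes a more elaborate route than the paper. The paper applies Theorem~\ref{thm:bme} directly to $([n],\delta)$ to obtain $f$; Claim~1 is then immediate, and Claim~2 follows in one line: for $i,j\in T$ the \lpccc{} constraint gives $\delta_{ij}=0$, and the upper bound in Theorem~\ref{thm:bme} forces $\norm{f(i)-f(j)}_1\le C\log n\cdot 0=0$, hence $f(i)=f(j)$. Your quotient-then-embed construction arrives at the same conclusion but with extra bookkeeping (defining $V'$, checking $\delta'$ is a metric, pulling $g$ back, case-splitting the verification of Claim~1). The one thing your approach buys is that you explicitly confront the pseudometric issue---Theorem~\ref{thm:bme} is stated for genuine metrics, whereas $\delta$ may vanish on distinct points even outside $T$---and you handle it by collapsing zero-distance classes before invoking Bourgain. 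The paper's proof tacitly assumes the standard extension of Bourgain's embedding to pseudometrics, which is indeed trivial (identical points get identical images), so both arguments are sound; yours is just more careful about this point at the cost of being longer.
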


\begin{proof}
  Let $f$ be the function whose existence is guaranteed by Theorem~\ref{thm:bme}. Note that $f$ satisfies Claim 1 of the lemma. For Claim 2, let $i,j \in T$. We know from the constraints on \lpccc{} that $\delta_{ij} = 0$. Therefore, from Theorem~\ref{thm:bme}, we may again conclude that:
  \begin{equation*}
    \norm{f(i) - f(j)}_1 = 0 \implies f(i) = f(j)
  \end{equation*}
  Thus proving Claim 2.
\end{proof}

The next lemma from \cite{linial1995geometry} shows that it is possible to express the $l_1$ metric defined by $f$ on the state space as a sum of cut metrics.

\begin{lemma}[\cite{linial1995geometry}]
  \label{lem:l1cut}
  Given $f: [n] \rightarrow \mb{R}^m$, it is possible to find in time $poly(n,m)$ a polynomial number of subsets $S_1, \dots, S_r$ and associated constants $\alpha_{S_i} > 0$ such that:

  \begin{equation*}
    \norm{f(j) - f(k)}_1 = \sum_{i = 1}^r \alpha_{S_i} \delta_{S_i} (j, k) \ \forall j,k \in [n]
  \end{equation*}
\end{lemma}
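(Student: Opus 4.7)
The plan is to reduce the claim to the one-dimensional case by separating coordinates, and then handle each coordinate with a sweep of threshold cuts. Since $\norm{f(j) - f(k)}_1 = \sum_{\ell = 1}^m \abs{f_\ell(j) - f_\ell(k)}$, if for each coordinate $\ell$ we can produce subsets and coefficients whose induced cut-metric combination equals the absolute-difference metric on that coordinate, then concatenating these lists across all $m$ coordinates gives the desired decomposition of $\norm{f(\cdot) - f(\cdot)}_1$.

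For a single coordinate $\ell$, I would first sort the values $f_\ell(1), \dots, f_\ell(n)$ and let $v_1 < v_2 < \dots < v_s$ denote the distinct values (so $s \leq n$). For each $t = 1, \dots, s - 1$, define the threshold set
\begin{equation*}
  S^{(\ell)}_t = \{i \in [n]: f_\ell(i) \leq v_t\}
\end{equation*}
and assign the coefficient $\alpha^{(\ell)}_t = v_{t+1} - v_t > 0$. The key identity, which I would verify directly, is that for any $j, k \in [n]$ with (without loss of generality) $f_\ell(j) \leq f_\ell(k)$,
\begin{equation*}
  \abs{f_\ell(j) - f_\ell(k)} = \sum_{t=1}^{s-1} (v_{t+1} - v_t) \cdot \bm{1}\{f_\ell(j) \leq v_t < f_\ell(k)\},
\end{equation*}
which is just a telescoping sum. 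The indicator on the right is precisely $\delta_{S^{(\ell)}_t}(j,k)$, since $j \in S^{(\ell)}_t$ and $k \notin S^{(\ell)}_t$ iff $f_\ell(j) \leq v_t < f_\ell(k)$ (symmetrically if $f_\ell(k) \leq f_\ell(j)$). Summing gives $\abs{f_\ell(j) - f_\ell(k)} = \sum_t \alpha^{(\ell)}_t \delta_{S^{(\ell)}_t}(j,k)$.

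Finally I would union the collections $\{S^{(\ell)}_t\}$ across $\ell = 1, \dots, m$ with their coefficients, yielding the family $S_1, \dots, S_r$ with $r \leq m(n-1)$ subsets and coefficients $\alpha_{S_i} > 0$. Summing the per-coordinate identities gives the lemma's conclusion for all $j,k$. Each step, sorting per coordinate and reading off $s - 1 \leq n - 1$ thresholds, runs in $O(n \log n)$ time per coordinate, so the overall runtime is $\mathrm{poly}(n, m)$ as claimed. There is no real obstacle here: the only minor point to be careful about is handling repeated coordinate values (collapsing to distinct $v_t$'s as above), and noting that the telescoping identity holds regardless of which of $j, k$ is larger, which the symmetry of $\delta_{S}$ takes care of.
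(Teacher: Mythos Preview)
Your argument is correct and is in fact the standard proof of this classical fact. Note that the paper does not supply its own proof of this lemma; it is quoted as a known result from \cite{linial1995geometry}, so there is nothing to compare against beyond observing that your threshold-cut decomposition is precisely the construction used there.
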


Now, finally, we conclude that the integrality gap of the Linear Programming Relaxation \lpccc{} is small and furthermore, a cut obtaining such a value can be found efficiently.

\begin{theorem}
  \label{thm:spccc}
  Given an instance of the \spccc{} problem $(\bm{P}, T)$, there exists a polynomial time algorithm, \fc{} which returns a cut $S^*$ satisfying:

  \begin{equation*}
    \frac{\sum_{i, j \in [n]} \delta_{S^*} (i,j) \bm{P}_{ij}}{\sum_{i, j \in [n]} \delta_{S^*} (i,j)} \leq O(\log n) \min_{T \subseteq S \subset V} \frac{\sum_{i, j \in [n]} \delta_{S} (i,j) \bm{P}_{ij}}{\sum_{i, j \in [n]} \delta_{S} (i,j)}
  \end{equation*}

  Furthermore, we have that $T \cap S^* = \phi$
\end{theorem}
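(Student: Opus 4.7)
The plan is to follow the classical LP-rounding recipe of \cite{linial1995geometry} for the ordinary Sparsest Cut problem and verify that Bourgain's embedding plus the cut-metric decomposition interact well with the extra component constraint. First I would solve \lpccc{} to obtain a metric $\delta$ and optimal value $v$; note that since \lpccc{} relaxes the integral \spccc{} formulation (every feasible cut metric $\delta_S$ with $T \subseteq S$, rescaled so that $\sum_{ij}\delta_{S}(i,j)=1$, is feasible for \lpccc{}), we already have $v \leq \min_{T\subseteq S} g_{\bm{P}}(S)$.

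Next I would apply Lemma~\ref{lem:me} to get the embedding $f \colon [n] \to \mathbb{R}^m$ with $\delta_{ij} \leq \|f(i)-f(j)\|_1 \leq C\log n \cdot \delta_{ij}$ and, crucially, $f(i)=f(j)$ for all $i,j\in T$. Applying Lemma~\ref{lem:l1cut} I would write
\begin{equation*}
  \|f(i)-f(j)\|_1 \;=\; \sum_{r=1}^{R} \alpha_{S_r}\,\delta_{S_r}(i,j).
\end{equation*}
The key new observation over the standard argument is that $f(i)=f(j)$ for $i,j\in T$ forces $\delta_{S_r}(i,j)=0$ on $T\times T$ for every $r$ with $\alpha_{S_r}>0$; hence each such $S_r$ satisfies either $T\subseteq S_r$ or $T\subseteq \bar S_r$, and, by flipping $S_r$ to its complement if necessary (which preserves $\delta_{S_r}$, $g_{\bm P}(S_r)$, and $\alpha_{S_r}$), I can assume $T\cap S_r=\emptyset$ for every $r$ used in the decomposition.

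To finish I would run the standard averaging argument on the identity
\begin{equation*}
  \frac{\sum_{ij}\|f(i)-f(j)\|_1\bm{P}_{ij}}{\sum_{ij}\|f(i)-f(j)\|_1}
  \;=\;
  \frac{\sum_r \alpha_{S_r}\sum_{ij}\delta_{S_r}(i,j)\bm{P}_{ij}}{\sum_r \alpha_{S_r}\sum_{ij}\delta_{S_r}(i,j)},
\end{equation*}
which must be at least $\min_r g_{\bm{P}}(S_r)$. Using the two-sided embedding bound, the left-hand side is sandwiched between $v$ and $C\log n \cdot v$, so selecting $S^*\in\argmin_r g_{\bm{P}}(S_r)$ yields $g_{\bm P}(S^*) \leq C\log n \cdot v \leq O(\log n)\cdot \min_{T\subseteq S} g_{\bm P}(S)$, and by construction $T\cap S^*=\emptyset$. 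The algorithm \fc{} is then: solve \lpccc{}, produce $f$ by the Bourgain embedding, decompose via Lemma~\ref{lem:l1cut}, flip any $S_r$ containing $T$ to its complement, and return the best of the polynomially many cuts $S_r$.

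The only real subtlety—and the step I expect to be the main obstacle to verify cleanly—is the propagation of the $T$-constraint through the embedding and the cut-decomposition: one needs that the constraint $\delta_{ij}=0$ for $i,j\in T$ truly translates, first into $f$ being constant on $T$ (handled by Lemma~\ref{lem:me}), and then into every cut $S_r$ in the Lemma~\ref{lem:l1cut} decomposition separating $T$ cleanly from its complement. Once this is established, the numerator/denominator averaging and the complementation trick to achieve $T\cap S^*=\emptyset$ are routine.
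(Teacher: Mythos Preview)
Your proposal is correct and follows essentially the same route as the paper: solve \lpccc{}, apply Lemma~\ref{lem:me} to get an embedding constant on $T$, decompose the resulting $\ell_1$ metric into cut metrics via Lemma~\ref{lem:l1cut}, use the standard averaging inequality together with the two-sided distortion bound to obtain the $O(\log n)$ factor, and finally use the constancy of $f$ on $T$ to argue (exactly as you do) that every cut in the decomposition places $T$ entirely on one side, so complementing if necessary yields $T\cap S^*=\emptyset$. The only minor wording issue is your ``sandwiched between $v$ and $C\log n\cdot v$'' claim---all that is actually needed (and all the paper uses) is the upper bound $\le C\log n\cdot v$, obtained by applying the upper distortion bound to the numerator and the lower bound to the denominator.
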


\begin{proof}
  First, let $(\delta, v) = \lpccc(\bm{P}, T)$ and let $f$ be the function whose existence is guaranteed by Lemma~\ref{lem:me}. Furthermore $S_1, \dots, S_r$ denote the cuts with the associated constants $\alpha_{S_r} > 0$ as obtained from Lemma~\ref{lem:l1cut}. Now, we have:

  \begin{align*}
    \min_{i \in [r]} \frac{\sum_{j,k \in [n]} \delta_{S_i} (j, k) \bm{P}_{jk}}{\sum_{j,k \in [n]}\delta_{S_i} (j,k)} &\leq \frac{\sum_{i = 1}^r \alpha_{S_i}\sum_{j,k \in [n]} \delta_{S_i} (j, k) \bm{P}_{jk}}{\sum_{i = 1}^r \alpha_{S_i}\sum_{j,k \in [n]}\delta_{S_i} (j,k)} \\
    &= \frac{\sum_{j,k \in [n]} \norm{f(j) - f(k)}_1 \bm{P}_{jk}}{\sum_{j, k \in [n]} \norm{f(j) - f(k)}_1} \leq O(\log n) v
  \end{align*}

  where the first inequality follows from the fact that $\min_i \{\frac{a_i}{b_i}\} \leq \frac{\sum a_i}{\sum b_i}$ and the final inequality follows by applying the lower bound from Theorem~\ref{thm:bme} to the denominator and the upper bound to the numerator. But since $v$ is less than the optimal value of the sparsest cut as it is a relaxation of the problem, we have proved the first claim of the theorem as we simply return the cut which minimizes the above ratio.

  The final result of the theorem will follow from the claim that for all $i \in [r]$, we have either $T \subseteq S_i$ or $T \subseteq \bar{S}_i$ and we return whichever one does not contain $T$. To prove the claim, assume for the sake of contradiction that there exists $i \in [r]$ and $j,k \in T$ such that $j \in S_i$ and $k \in \bar{S}_i$. Then, we have:

  \begin{equation*}
    0 = \norm{f(k) - f(j)}_1 = \sum_{h = 1}^r \alpha_{S_h} \delta_{S_h} (j, k) \geq \alpha_{S_i} \delta_{S_i} (j, k) = \alpha_{S_i} > 0
  \end{equation*}

  which is a contradiction. This proves the claim and the second result of the theorem.
\end{proof}

\subsection{Extracting a Single Component}
For the purposes of our algorithm, we will consider a slightly different version of the sparsest cut problem. We begin by restating the definition of the expansion of a subset of the state space $S$:

\begin{definition}[Expansion]
  Given a matrix, $\bm{P}$, with non-negative entries, the expansion of a set $S$, denoted by $h_{\bm{P}}(S)$ is defined as:
  \begin{equation*}
    h_{\bm{P}} (S) = \frac{\sum_{i \in S, j \notin S} \bm{P}_{ij}}{\min (\abs{S}, \abs{\bar{S}})}
  \end{equation*}
\end{definition}

We will now re-state the definition of the Cheeger constant of a graph:
\begin{definition}[Cheeger Constant]
  The Cheeger Constant of a Markov Chain with transition matrix, $\bm{P}$, is the minimum expansion of any subset of the state space.

  \begin{equation*}
    \chg(\bm{P}) = \min_{S \subset [n]} h_{\bm{P}} (S)
  \end{equation*}
\end{definition}

\begin{algorithm}[H]
  \caption{Extract Component}
  \label{alg:ec}
  \begin{algorithmic}[1]
    \STATE \textbf{Input}: Transition Matrix $\bm{P}$, Extracted States $T$, Tolerance $\beta$

    \STATE $\S_0 \leftarrow \fc ([n], \bm{P}, T),\ t \leftarrow 0$ \label{ec:s0}
    \STATE $v_0 \leftarrow h_{\bm{P}} (\S_0)$

    \IF {$v_0 \geq \beta / 8$} 
      \STATE $\S_0 \leftarrow [n] \setminus T$
      \STATE $v_0 \leftarrow \abs{S_0}^{-1} \sum_{i,j \in S_0} \bm{P}_{ij}$
      \IF {$v_0 \leq 1 - \beta / 8$} \label{ec:afail}
        \STATE \textbf{Return: }False \label{ec:fret}
      \ENDIF
    \ENDIF \label{ec:intowhile}

    \WHILE {$\abs{\S_t} > 1$}
      \STATE $\Sp_t \leftarrow \fc (S_t, \bm{P}_{S_t}, \phi)$
      \STATE $v_t \leftarrow h_{\bm{P}_{S_t}} (\Sp_t)$

      \IF {$v_t \geq \beta / (8 \log n)$} \label{ec:stif}
        \STATE \textbf{break} \label{ec:stbreak}
      \ENDIF

      \STATE $u_{\Sp_t} \leftarrow \frac{\sum_{i, j \in \Sp_t} \bm{P}_{ij}}{\abs{\Sp_t}},\ u_{\bar{\Sp_t}} \leftarrow \frac{\sum_{i,j \in (S_t \setminus \Sp_t)} \bm{P}_{ij}}{\abs{S_t \setminus \Sp_t}}$
      \IF {$u_{\Sp_t} \leq u_{\bar{\Sp_t}}$}
        \STATE $S_{t + 1} \leftarrow \Sp_t$
      \ELSE
        \STATE $S_{t + 1} \leftarrow S_t \setminus \Sp_t$
      \ENDIF
      \STATE $t \leftarrow t + 1$
    \ENDWHILE

    \STATE \textbf{Return: } $S_t$
  \end{algorithmic}
\end{algorithm}

Here, we state a short lemma relating the expansion of a subset to its cut value.

\begin{lemma}
  \label{lem:expCut}
  For a matrix $\bm{P}$ with positive entries and a subset $S$, we have:

  \begin{equation*}
    \frac{n}{2} g_{\bm{P}} (S) \leq h_{\bm{P}} (S) \leq n g_{\bm{P}} (S)
  \end{equation*}

  Consequently, we have for the cut, $S^*$ returned by \fc{} when run with input $(\bm{P}, T)$:

  \begin{equation*}
    h_{\bm{P}} (S^*) \leq O(\log n) \min_{T \subseteq S \subset [n]} h_{\bm{P}} (S)
  \end{equation*}
\end{lemma}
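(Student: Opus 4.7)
The plan has two short pieces, both essentially algebraic.

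For the first claim, I would observe that the numerators of $h_{\bm{P}}(S)$ and $g_{\bm{P}}(S)$ are literally the same sum $\sum_{i\in S,\, j\in \bar{S}} \bm{P}_{ij}$, so the ratio reduces to a purely combinatorial quantity:
\begin{equation*}
\frac{h_{\bm{P}}(S)}{g_{\bm{P}}(S)} \;=\; \frac{\abs{S}\,\abs{\bar{S}}}{\min(\abs{S},\abs{\bar{S}})} \;=\; \max(\abs{S},\abs{\bar{S}}).
\end{equation*}
Since $\abs{S}$ and $\abs{\bar{S}}$ are positive integers summing to $n$, this maximum lies in $[n/2,\,n]$ (the lower bound because one of them must be at least $n/2$, the upper bound because neither exceeds $n$). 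Substituting back gives exactly $\frac{n}{2}\, g_{\bm{P}}(S) \leq h_{\bm{P}}(S) \leq n\, g_{\bm{P}}(S)$.

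For the consequence, I would combine the first claim with the guarantee of Theorem~\ref{thm:spccc}. Let $S_{\mathrm{opt}}$ be any minimizer of $h_{\bm{P}}$ over sets with $T \subseteq S \subset [n]$. Applying the upper half of the first claim to $S^*$ gives $h_{\bm{P}}(S^*) \leq n\, g_{\bm{P}}(S^*)$; Theorem~\ref{thm:spccc} then gives $g_{\bm{P}}(S^*) \leq O(\log n)\, g_{\bm{P}}(S_{\mathrm{opt}})$; and applying the lower half of the first claim to $S_{\mathrm{opt}}$ in the reverse direction gives $g_{\bm{P}}(S_{\mathrm{opt}}) \leq \tfrac{2}{n}\, h_{\bm{P}}(S_{\mathrm{opt}})$. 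Chaining these three inequalities, the two factors of $n$ cancel and I obtain $h_{\bm{P}}(S^*) \leq O(\log n)\, h_{\bm{P}}(S_{\mathrm{opt}})$, which is the stated bound.

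There is essentially no conceptual obstacle. The only thing to be careful about is orienting each side of the first claim in the correct direction, so that the $n$'s cancel cleanly in the chain; using the two bounds the wrong way would leave a residual factor of $n$ and destroy the logarithmic approximation.
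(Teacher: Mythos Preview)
Your proposal is correct and follows essentially the same approach as the paper: both arguments note that the ratio $h_{\bm{P}}(S)/g_{\bm{P}}(S)$ equals $\max(\abs{S},\abs{\bar{S}})\in[n/2,n]$ (the paper phrases this as a case split on $\abs{S}\le n/2$), and both then chain the upper bound for $S^*$, Theorem~\ref{thm:spccc}, and the lower bound for the optimizer so that the factors of $n$ cancel. Your ratio formulation is a slightly cleaner packaging, but the content is identical.
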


\begin{proof}
  We first consider the case that $S \leq n / 2$. In this case, we have that $n / 2 \leq \abs{\bar{S}} \leq n$ and consequently:
  \begin{equation*}
    \frac{n}{2} g_{\bm{P}} (S) \leq h_{\bm{P}} (S) \leq n g_{\bm{P}} (S)
  \end{equation*}
  The alternate case is proved similarly.

  For the second claim of the lemma, we will again assume that $\abs{S^*} \leq n / 2$. Now, have from Theorem~\ref{thm:spccc} and the equation above:

  \begin{equation*}
    h_{\bm{P}} (S^*) \leq ng_{\bm{P}} (S^*) \leq n \cdot O(\log n) \min_{T \subseteq S \subset [n]} g_{\bm{P}} (S) \leq O(\log n) \min_{T \subseteq S \subset V} n \cdot \frac{2}{n} \cdot h_{\bm{P}} (S)
  \end{equation*}

  This proves the second claim of the lemma.
\end{proof}

The next lemma is the main result of the subsection concerning the performance of Algorithm~\ref{alg:ec}.

\begin{lemma}
  \label{lem:ep}
  Algorithm~\ref{alg:ec} runs in randomized polynomial time and either returns partition $S$ disjoint from $T$ satisfying:

  \begin{equation*}
    \text{Claim 1: } \frac{\sum_{i, j \in S} \bm{P}_{ij}}{\abs{S}} \geq 1 - \beta
    \qquad \text{Claim 2: } \forall R \subset \S\ \frac{\sum_{i \in R, j \in (\S \setminus R)} \bm{P}_{ij}}{\min(\abs{R}, \abs{\S \setminus R})} \geq \Omega \lprp{\frac{\beta}{\log^2 n}}
  \end{equation*}

  Or returns $False$ and certifies for all subsets $S \subset ([n] \setminus T)$, we have:

  \begin{equation*}
    \text{Claim 3: } h_{\bm{P}} (S) \geq \Omega \lprp{\frac{\beta}{\log n}}\qquad \text{Claim 4: } \frac{\sum_{i,j \in [n] \setminus T} \bm{P}_{ij}}{n - \abs{T}} \leq 1 - \frac{\beta}{8} 
  \end{equation*}
\end{lemma}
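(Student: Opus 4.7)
The plan is to split on whether Algorithm~\ref{alg:ec} returns False at line \ref{ec:fret} or returns a set $S_t$ from the while loop. In the False case, two conditions must both hold: $v_0 = h_{\bm{P}}(\S_0) \geq \beta/8$ (from the initial \fc{} call), and the density check at line \ref{ec:afail} fails so that $\frac{\sum_{i,j \in [n] \setminus T} \bm{P}_{ij}}{n - \abs{T}} \leq 1 - \beta/8$. The second is Claim 4 verbatim. Claim 3 follows from Lemma~\ref{lem:expCut}: \fc{} is an $O(\log n)$-approximation for the minimum expansion over subsets containing $T$ on one side, and since its output already has expansion $\geq \beta/8$, every subset $S \subseteq [n] \setminus T$ satisfies $h_{\bm{P}}(S) = h_{\bm{P}}([n] \setminus S) \geq \Omega(\beta/\log n)$.

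In the returned-set case, Claim 2 is almost immediate: the loop exits either with $\abs{S_t} \leq 1$ (vacuous) or with $v_t = h_{\bm{P}_{S_t}}(\Sp_t) \geq \beta/(8 \log n)$, and Lemma~\ref{lem:expCut} applied inside the sub-chain $\bm{P}_{S_t}$ says the minimum internal expansion is within an $O(\log n)$ factor of $v_t$, giving $h_{\bm{P}_{S_t}}(R) \geq \Omega(\beta/\log^2 n)$ for every $R \subsetneq S_t$.

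The real work is Claim 1, which I establish by tracking the deficit $\alpha_t := 1 - d(S_t)$ where $d(S) := \frac{1}{\abs{S}}\sum_{i,j\in S} \bm{P}_{ij}$; by row-stochasticity of $\bm{P}$ one has $\alpha_t = \frac{1}{\abs{S_t}}\sum_{i \in S_t, j \notin S_t} \bm{P}_{ij}$. First, $\alpha_0 \leq \beta/8$: in the first branch $v_0 < \beta/8$ gives $\alpha_0 \leq h_{\bm{P}}(\S_0) = v_0 < \beta/8$, and in the second branch reaching the loop requires the density check at line \ref{ec:afail} to pass (otherwise False is returned). For each loop iteration, writing $A_t = S_{t+1}$, $B_t = S_t \setminus S_{t+1}$, and $c_t$ for the $\bm{P}$-mass between them, the decomposition $d(S_t)\abs{S_t} = d(A_t)\abs{A_t} + d(B_t)\abs{B_t} + 2c_t$ combined with the algorithm's choice of the denser half yields $\alpha_{t+1} \leq \alpha_t + 2c_t/\abs{S_t}$. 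Bounding $c_t \leq v_t \min(\abs{A_t},\abs{B_t}) \leq \frac{\beta}{8\log n}(\abs{S_t} - \abs{S_{t+1}})$ (the last inequality holds regardless of which side is picked, since $\min(\abs{A_t}, \abs{B_t}) \leq \abs{S_t} - \abs{S_{t+1}}$) and telescoping via $1 - x \leq \log(1/x)$ gives $\sum_t (\abs{S_t} - \abs{S_{t+1}})/\abs{S_t} \leq \log(\abs{\S_0}/\abs{S_T}) \leq \log n$, so $\alpha_T \leq \alpha_0 + \beta/4 \leq 3\beta/8 \leq \beta$.

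The main obstacle I anticipate is the per-iteration accounting: the naive bound $\alpha_{t+1}\abs{S_{t+1}} \leq \alpha_t\abs{S_t} + c_t$ introduces a harmful $\abs{S_t}/\abs{S_{t+1}}$ factor and lets $\alpha$ blow up multiplicatively when the picked side is small, so one genuinely needs the denser-half rule to convert this into an additive recurrence $\alpha_{t+1} \leq \alpha_t + 2c_t/\abs{S_t}$. Combined with the telescoping $\sum_t \frac{\abs{S_t}-\abs{S_{t+1}}}{\abs{S_t}} \leq \log n$, this keeps the iteration count from entering linearly (a direct bound on the number of loop iterations appears difficult, since a single iteration can remove only one vertex).
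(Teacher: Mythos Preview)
Your proposal is correct and follows essentially the same approach as the paper for Claims~2, 3, and 4. For Claim~1 both arguments use the denser-half rule to obtain the per-step recurrence $d(S_{t+1}) \geq d(S_t) - \frac{\beta}{8\log n}\cdot\frac{\abs{S_t}-\abs{S_{t+1}}}{\abs{S_t}}$; you then telescope directly via $1-x \leq \ln(1/x)$ to get $\sum_t \frac{\abs{S_t}-\abs{S_{t+1}}}{\abs{S_t}} \leq \ln n$, whereas the paper instead partitions the iterations into at most $\log n$ halving-phases and bounds the loss in each phase by $\beta/(4\log n)$, but both routes land on $d(S_T) \geq 1 - 3\beta/8$.
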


\begin{proof}
  We will first prove the third claim of the lemma. Let $\tilde{S}$ be the set returned in Line~\ref{ec:s0} of the algorithm. The only way the algorithm returns $False$ is if Line~\ref{ec:fret} is executed. Therefore, we have from the second claim of Lemma~\ref{lem:expCut} and the fact that Line~\ref{ec:fret} is executed:

  \begin{equation*}
    \frac{\beta}{8} \leq h_{\bm{P}} (\tilde{S}) \leq O(\log n) \min_{T \subseteq S \subset [n]} h_{\bm{P}} (S)
  \end{equation*}

  This proves the third claim of the lemma. The fourth claim of the lemma follows trivially from the fact that the if condition in Line~\ref{ec:afail} evaluates to true.

  Now, we will assume that the Algorithm is in the case where a set $S$ is returned. For the second claim of the lemma, the algorithm either returns a set containing a single element in which case, the claim is trivially true. In the alternate case, the break statement in Line~\ref{ec:stbreak} was executed and we have again from Lemma~\ref{lem:expCut}:

  \begin{equation*}
    \frac{\beta}{8 \log n} \leq h_{\bm{P}_S} (\Sp) \leq O(\log n) \min_{R \subset S} h_{\bm{P}_{S}} (S)
  \end{equation*}

  which implies the second claim of the Lemma.

  For the first claim of the lemma, assume that the inner loop runs for $K$ time steps. Now, consider the times $t_{0}, \dots, t_{k}$ defined as follows:
  \begin{equation*}
    t_{0} = 0, \qquad t_{k} = \min \{t \in [K]: \abs{S_{t}} \leq \abs{S_{t_{k - 1}}} / 2\}\ \forall k \in \{1, \dots, K - 1\},\qquad t_k = K
  \end{equation*}

  It is clear that $k$ is at most $\log n$. Now, we will prove the following claim:

  \begin{claim}
    \label{clm:ine}
    $\forall i \in \{0, \dots, k\}$, we have that $S_{t_i}$ satisfies:

    \begin{equation*}
      \frac{\sum_{i, j \in S_{t_i}} \bm{P}_{ij}}{\abs{S_{t_i}}} \geq 1 - \frac{\beta}{8} - \frac{i \beta}{4 \log n}
    \end{equation*}
  \end{claim}
  Instantiating Claim~\ref{clm:ine}, with $i = k$, proves the first claim of the Lemma by nothing that $k$ is at most $\log n$. Now, we will prove the claim via induction.\\
  \textbf{Base Case: } $i = 0$: The base case is true as the algorithm only proceeds beyond Line~\ref{ec:intowhile} if:
  \begin{equation*}
    \frac{\sum_{i \in S_{0},j \in \bar{S_{0}}} \bm{P}_{ij}}{\abs{S_{0}}} \leq \frac{\beta}{8} \implies \frac{\sum_{i,j \in S_{0}} \bm{P}_{ij}}{\abs{S_{0}}} \geq 1 - \frac{\beta}{8}
  \end{equation*}
  \textbf{Inductive Step: } Suppose that the claim is true for $l$, we will verify the claim for $l + 1$. Let $R_{m}$ denote the sets $(S_{m} \setminus S_{m + 1})$ for $m \in \{t_l, \dots, t_{l + 1} - 1\}$. Now, for $m \in \{t_l, \dots, t_{l + 1} - 1\}$:

  \begin{equation*}
    \sum_{i, j \in S_m} \bm{P}_{ij} = \sum_{i, j \in S_{m + 1}} \bm{P}_{ij} + \sum_{i, j \in R_{m}} \bm{P}_{ij} + \sum_{i \in S_{m + 1}, j \in R_{m}} \bm{P}_{ij}
  \end{equation*}

  Therefore, we have:

  \begin{equation*}
    \sum_{i, j \in S_{m + 1}} \bm{P}_{ij} + \sum_{i, j \in R_{m}} \bm{P}_{ij} = \sum_{i, j \in S_m} \bm{P}_{ij} - \sum_{i \in S_{m + 1}, j \in R_{m}} \bm{P}_{ij} \geq \sum_{i, j \in S_m} \bm{P}_{ij} - \frac{\beta}{8\log n} \abs{R_{m}}
  \end{equation*}

  where the last inequality follows because the algorithm will only proceed to step $m + 1$ if the condition in Line~\ref{ec:stif} of the Algorithm~\ref{alg:ec} fails. Rewriting the above inequality in terms of the quantities $u_{S^\prime_m}, u_{\bar{S}^\prime_m}$, we get:

  \begin{equation*}
    \abs{S^\prime_{m}}u_{S^\prime_m} + \abs{\bar{S}^\prime_m}u_{\bar{S}^\prime_m} \geq \sum_{i, j \in S_m} \bm{P}_{ij} - \frac{\beta}{8\log n} \abs{R_{m}}
  \end{equation*}

  From the above inequality, we may conclude by dividing both sides by $\abs{S_m}$ that (As the average of two numbers is always smaller than the larger number):
  \begin{multline*}
    \frac{\sum_{i, j \in S_{m + 1}} \bm{P}_{ij}}{\abs{S_{m + 1}}} \geq \frac{\sum_{i, j \in S_{m}} \bm{P}_{ij}}{\abs{S_{m}}} - \frac{\beta\cdot \abs{R_{m}}}{8\log n\cdot \abs{S_{m}}} \\
    \geq \frac{\sum_{i, j \in S_{m}} \bm{P}_{ij}}{\abs{S_{m}}} - \frac{\beta\cdot \abs{R_{m}}}{8\log n\cdot \abs{S_{t_{l}}} / 2} = \frac{\sum_{i, j \in S_{m}} \bm{P}_{ij}}{\abs{S_{m}}} - \frac{\beta\cdot \abs{R_{m}}}{4\log n\cdot \abs{S_{t_{l}}}}
  \end{multline*}
  where the second inequality follows from the fact that in the range of $m$, $\abs{S_m} \geq \abs{S_{t_l}} / 2$. By summing up the above inequality for $m$ ranging from $t_l$ to $t_{l + 1} - 1$, we get:

  \begin{equation*}
    \frac{\sum_{i, j \in S_{t_{l + 1}}} \bm{P}_{ij}}{\abs{S_{t_{l + 1}}}} \geq \frac{\sum_{i, j \in S_{t_l}} \bm{P}_{ij}}{\abs{S_{t_l}}} - \frac{\beta\cdot \sum_{m = t_l}^{t_l - 1}\abs{R_{m}}}{4\log n\cdot \abs{S_{t_{l}}}} \geq \frac{\sum_{i, j \in S_{t_l}} \bm{P}_{ij}}{\abs{S_{t_l}}} - \frac{\beta}{4\log n} \geq 1 - \frac{\beta}{8} - \frac{(l + 1) \beta}{4\log n}
  \end{equation*}
  where the second inequality follows from that fact that the $R_m$ are disjoint subsets of $S_m$ and the second inequality follows from the inductive hypothesis. This proves Claim~\ref{clm:ine} and as explained earlier, the claim implies the first claim of the lemma.
\end{proof}

\subsection{Partitioning the Markov Chain}
\label{prf:pg}
In this subsection, we will design an algorithm to partition the entire state space of the Markov Chain. Our graph partitioning algorithm is illustrated in Algorithm~\ref{alg:pg}. We recursively call Algorithm~\ref{alg:ec} and stop when no more components can be extracted from the state space. We then use the guarantees provided by Lemma~\ref{lem:ep} to prove Lemma~\ref{lem:pg}.

\begin{algorithm}[H]
  \caption{Partition Graph}
  \label{alg:pg}
  \begin{algorithmic}[1]
    \STATE \textbf{Input}: Transition Matrix $\bm{P}$, Tolerance $\beta$

    \STATE $\mathcal{S} \leftarrow \{\}$
    \STATE $t \leftarrow 0$
    \STATE $T_t \leftarrow \phi$
    \STATE $S_t \leftarrow \text{Extract Component}(\bm{P}, T_t, \beta)$

    \WHILE {$S_t \neq False$}
      \STATE $\mathcal{S} \leftarrow \mathcal{S} \cup \{S_t\}$
      \STATE $T_{t + 1} \leftarrow T_{t} \cup S_{t}$
      \STATE $t \leftarrow t + 1$
      \STATE $S_t \leftarrow \text{Extract Component}(\bm{P}, T_t, \beta)$
    \ENDWHILE

    \STATE \textbf{Return: } $(\mathcal{S}, [n] \setminus T_t)$
  \end{algorithmic}
\end{algorithm}

  We will now proceed with the proof of Lemma~\ref{lem:pg}. We first note that $T_t \neq \phi$ at the end of the algorithm as this would violate Claim 4 of Lemma~\ref{lem:ep}. Now, we have by induction that $T_0 = \phi$ and $T_t = \bigcup_{i = 0}^{t - 1} S_i$. We also have by Lemma~\ref{lem:ep}, that $S_t$ is disjoint with $T_{t}$ and is therefore disjoint with $S_{0}, \dots, S_{t - 1}$. This shows that the subsets in $\mathcal{S}$ are disjoint. Suppose the algorithm terminates with $t = l$, note that $T_l = \bigcup_{S \in \mathcal{S}} S$ and consequently $T = [n] \setminus T_l$ and this proves the final claim of the lemma that the subsets in $\mathcal{S}$ along with $T$ form a partition of $[n]$.

  For the first two claims of the lemma, we have for all $S \in \mathcal{S}$, $S$ is returned by Algorithm~\ref{alg:ec} and the first two claims follow from the first two claims of Lemma~\ref{lem:ep}.

  We now prove the third claim of the lemma. We first note that if $T \neq \phi$, then from Claim 4 of Lemma~\ref{lem:ep} for $T$ and Claim 1 of Lemma~\ref{lem:ep} for each $S \in \mathcal{S}$:
  \begin{equation*}
    \frac{\beta}{8} \cdot \abs{T} \leq \sum_{i \in T, j \in \bar{T}} \bm{P}_{ij} = \sum_{S \in \mathcal{S}} \sum_{j \in S, i \in T} \bm{P}_{ij} \leq \sum_{S \in \mathcal{S}} \beta \abs{S} \implies \abs{T} \leq \frac{8n}{9}
  \end{equation*}

  Now, let any $R \subset T$. In the case that $\abs{R} \leq n / 2$, Claim 3 follows from Claim 3 of Lemma~\ref{lem:ep}. For $\abs{R} \geq n / 2$, note that $\abs{R} \leq 8n / 9$. Therefore, we have from Claim 3 of Lemma~\ref{lem:ep}:
  \begin{equation*}
    \Omega \lprp{\frac{\beta}{\log n}} \leq h_{\bm{P}} (R) = \frac{\sum_{i \in R, j \in \bar{R}} \bm{P}_{ij}}{\abs{\bar{R}}} \leq 9 \frac{\sum_{i \in R, j \in \bar{R}} \bm{P}_{ij}}{n} \leq 9 \frac{\sum_{i \in R, j \in \bar{R}} \bm{P}_{ij}}{\abs{R}}
  \end{equation*}
  and Claim 3 follows.

\qed

\section{Markov Chain Properties}
\label{sec:mcprop}

Our first lemma concerns bounding the amount of time the trajectory of the Markov Chain spends in the component $T$. The proof of our lemma follows along the lines of Lemma~3.3 in \cite{sinclair1989approximate}. In our lemma, we bound the first eigenvalue of a sub-matrix of the transition matrix whereas in \cite{sinclair1989approximate}, the same techniques are used to bound the second eigenvalue of the whole transition matrix.

\begin{lemma}
  \label{lem:tnb}
  Let $\bm{P}$ be the transition matrix of a symmetric markov chain. Let $T \subset [n]$ satisfy:

  \begin{equation*}
    \forall R \subseteq T\ \frac{\sum_{i \in R, j \in \bar{R}} \bm{P}_{ij}}{\abs{R}} \geq \alpha
  \end{equation*}

  Then, $\bm{P}_T$ has the following bound on its spectral norm:

  \begin{equation*}
    \norm{\bm{P}_T} \leq 1 - \frac{\alpha^2}{2}
  \end{equation*}
\end{lemma}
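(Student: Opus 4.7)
The plan is to mirror Sinclair's Cheeger argument, with the twist that we are controlling the top eigenvalue of the submatrix $\bm{P}_T$ (not the second eigenvalue of the full chain), and the mass leaving $T$ plays the role of a boundary term in the Dirichlet form. Since $\bm{P}_T$ is symmetric and entrywise non-negative, Perron-Frobenius gives $\norm{\bm{P}_T} = \lambda_{\max}(\bm{P}_T) =: \lambda$ with a non-negative top eigenvector $\bm{v}$ supported on $T$; the goal reduces to showing $1 - \lambda \geq \alpha^2/2$.

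First I would write out the Rayleigh quotient. Put $q_i := \sum_{j \notin T}\bm{P}_{ij} = 1 - \sum_{j \in T}\bm{P}_{ij}$ for the escape mass from $i \in T$. Using the row-sum identity and the symmetry of $\bm{P}$, a direct computation yields the Dirichlet identity
\[
(1-\lambda)\norm{\bm{v}}^2 \;=\; \bm{v}^\top(\bm{I}_T - \bm{P}_T)\bm{v} \;=\; \tfrac{1}{2}\sum_{i,j \in T}(v_i - v_j)^2 \bm{P}_{ij} \;+\; \sum_{i \in T} v_i^2 q_i.
\]
Now extend $\bm{v}$ to $[n]$ by setting $v_i = 0$ for $i \notin T$. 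The two terms on the right-hand side then fuse into a single global sum,
\[
\sum_{i,j \in [n]}(v_i - v_j)^2 \bm{P}_{ij} \;=\; 2(1-\lambda)\norm{\bm{v}}^2,
\]
and an analogous accounting (using that $\sum_{i,j \in T}(v_i + v_j)^2 \bm{P}_{ij} = 2\norm{\bm{v}}^2 - 2\sum_{i \in T} v_i^2 q_i + 2\lambda \norm{\bm{v}}^2$) gives $\sum_{i,j \in [n]}(v_i + v_j)^2 \bm{P}_{ij} = 2(1+\lambda)\norm{\bm{v}}^2 \leq 4\norm{\bm{v}}^2$.

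Next I would apply Cauchy-Schwarz to the factorization $|v_i^2 - v_j^2| = |v_i - v_j|\,(v_i + v_j)$ with weights $\bm{P}_{ij}$:
\[
\Bigl(\sum_{i,j \in [n]}|v_i^2 - v_j^2|\bm{P}_{ij}\Bigr)^2 \;\leq\; \sum_{i,j}(v_i - v_j)^2 \bm{P}_{ij}\;\cdot\;\sum_{i,j}(v_i + v_j)^2 \bm{P}_{ij} \;\leq\; 8(1-\lambda)\norm{\bm{v}}^4.
\]
To lower-bound the left-hand side I would invoke the layer-cake identity: with level sets $L_t := \{i : v_i^2 > t\}$,
\[
\sum_{i,j \in [n]}|v_i^2 - v_j^2|\,\bm{P}_{ij} \;=\; 2\int_0^\infty \sum_{i \in L_t,\, j \notin L_t} \bm{P}_{ij}\, dt.
\]
Because $\bm{v}$ vanishes outside $T$, every $L_t$ is a subset of $T$, so the hypothesis applies at $R = L_t$, giving $\sum_{i \in L_t,\, j \notin L_t}\bm{P}_{ij} \geq \alpha |L_t|$. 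Integrating, $\sum_{i,j}|v_i^2 - v_j^2|\bm{P}_{ij} \geq 2\alpha \int_0^\infty |L_t|\,dt = 2\alpha\norm{\bm{v}}^2$. Chaining with the Cauchy-Schwarz bound yields $4\alpha^2\norm{\bm{v}}^4 \leq 8(1-\lambda)\norm{\bm{v}}^4$, i.e., $\lambda \leq 1 - \alpha^2/2$, as desired.

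The main subtlety, and the reason the argument deviates from Sinclair's textbook proof, is that in the usual second-eigenvalue setting the orthogonality of $\bm{v}$ to the constant vector absorbs all boundary contributions, whereas here the term $\sum v_i^2 q_i$ is genuinely present and must be handled explicitly. The key move is extending $\bm{v}$ by zero on $\bar T$: this turns the escape term into ordinary cut edges of $\bm{P}$ between $T$ and $\bar T$ in the global graph, so that the hypothesis "every $R \subseteq T$ is well-connected to its complement in $[n]$" slots directly into the layer-cake identity. I expect this bookkeeping step (matching boundary mass to isoperimetry on the extended vector) to be the only nontrivial part; everything else is a routine Cheeger-style chain of Cauchy-Schwarz and coarea.
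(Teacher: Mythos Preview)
Your proposal is correct and follows essentially the same approach as the paper: extend the Perron eigenvector by zero outside $T$ so that the escape mass becomes ordinary cut edges, then run the Sinclair Cauchy--Schwarz/coarea chain. The only cosmetic differences are that the paper collapses all of $\bar T$ into a single auxiliary state (working in dimension $\abs{T}+1$ rather than $n$) and replaces your continuous layer-cake integral with the equivalent discrete telescoping sum over sorted coordinates.
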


\begin{proof}
  Since $\bm{P}_T$ is symmetric and positive, its top eigenvalue, denoted by $\lambda$, is the same as its top singular value. Now, let $\abs{T} = m$ and let $\bm{u} \in \mb{R}^m$ be the eigenvector associated with the top eigenvalue. We will now suppose without loss of generality that $\bm{u}_1 \geq \bm{u}_2 \dots{} \bm{u}_{m-1} \geq \bm{u}_m \geq 0$ from the Perron-Frobenius Theorem. Now, we have:

  \begin{equation*}
    \bm{P}_T \bm{u} = \lambda \bm{u} \implies (\bm{I} - \bm{P}_T)\bm{u} = (1 - \lambda)\bm{u} \implies (1 - \lambda) = \bm{u}^\top (\bm{I} - \bm{P}_T) \bm{u}
  \end{equation*}

  We will now extend the vector $\bm{u}$ to a vector  $\bm{v} \in \mb{R}^{m + 1}$. Such that $\bm{v}_i = \bm{u}_i$ for all $i \in \{1, \dots, m\}$ and $\bm{v}_{m + 1} = 0$. Similarly, we extend $\bm{P}_T$ to an matrix $\bm{R} \in \mb{R}^{(m + 1) \times (m + 1)}$. Such that:
  
  \begin{equation*}
    \bm{R}_{ij} = \begin{cases}
                    (\bm{P}_T)_{ij}, &\text{for $i,j \in [m]$} \\
                    1 - \sum_{k \in [m]} (\bm{P}_T)_{ik}, &\text{for $i \in [m],\ j = m+1$} \\
                    1 - \sum_{k \in [m]} (\bm{P}_T)_{kj}, &\text{for $i = m+1,\ j \in [m]$} \\
                    0, &\text{otherwise}
                  \end{cases}  
  \end{equation*}

  Notice that $\bm{u}^\top (\bm{I} - \bm{P}_T) \bm{u} = \bm{v}^\top (\bm{I} - \bm{R}) \bm{v}$. Now, we expand the right hand side as follows:

  \begin{equation}
  \label{eqn:qfbnd}
    \bm{v}^\top (\bm{I} - \bm{R}) \bm{v} = \sum_{i = 1}^{m + 1}  v_i^2 - \sum_{i, j} (\bm{R})_{ij} v_i v_j = \sum_{i = 1}^{m + 1} (1 - \bm{R}_{ii})v_i^2 - 2 \sum_{i < j} \bm{R}_{ij} v_i v_j = \sum_{i < j} \bm{R}_{ij} (v_i - v_j)^2 
  \end{equation}

  Now, consider the equation:
  \begin{equation}
  \label{eqn:csbnd}
    \sum_{i < j} \bm{R}_{ij} (v_i + v_j)^2 \leq 2\sum_{i < j} \bm{R}_{ij} (v_i^2 + v_j^2) \leq 2\sum_{i, j \in [m + 1]} \bm{R}_{ij} v_i^2 = 2
  \end{equation}

  Now, we get from Equations~\ref{eqn:qfbnd} and \ref{eqn:csbnd}:
  \begin{equation}
  \label{eqn:pcsb}
    \bm{v}^\top (\bm{I} - \bm{R}) \bm{v} \geq \sum_{i < j} \bm{R}_{ij} (v_i - v_j)^2 \cdot \frac{\sum_{i < j} \bm{R}_{ij} (v_i + v_j)^2}{2} \geq \frac{1}{2} \cdot \lprp{\sum_{i < j} \bm{R}_{ij} (v_i^2 - v_j^2)}^2
  \end{equation}
  where the last inequality follows from Cauchy-Schwarz. Now, we will bound the term in the parenthesis in the final expression on the right hand side:

  \begin{align*}
    \sum_{i < j} \bm{R}_{ij} (v_i^2 - v_j^2) &= \sum_{i < j} \bm{R}_{ij} \sum_{k = i}^{j - 1} (v_{k}^2 - v_{k + 1}^2) = \sum_{k = 1}^{m} (v_k^2 - v_{k + 1}^2) \sum_{j > k, i \leq k} \bm{R}_{ij} \\
    & \geq \sum_{k = 1}^{m} (v_k^2 - v_{k + 1}^2) \alpha k = \alpha \sum_{j = 1}^m \sum_{k = j}^m (v_k^2 - v_{k + 1}^2) = \alpha \sum_{j = 1}^m v_j^2 = \alpha
  \end{align*}

  where the first inequality follows from the assumption on $\bm{P}$ and $T$ and the subsequent equality from the fact that $v_{m + 1} = 0$. Substituting the inequality in Equation~\ref{eqn:pcsb}, we get the desired result.
\end{proof}
\section{Deferred Proofs from Section~\ref{sec:mpr}}
\label{app:dptst}

\subsection{Proof of Lemma~\ref{lem:helbnd}}
\label{prf:helbnd}

  Let $l = \abs{T}$ and $v = \frac{1}{\sqrt{l}} \bm{1}_{T}$. Now, we consider two cases:

  \textbf{Case 1: } First, we consider the case where $\sum_{i, j \in T} \bm{Q}_{ij} \geq (1 - 5\epsilon / 16)l$. In this case, we have by the definition of $\distt$:
  \begin{align*}
    \dhel^2 \lprp{\distt(T, \bm{P}), \distt(T, \bm{Q})} &= \frac{1}{2} \lprp{\sum_{i \in T, j \in T} \frac{1}{l} \lprp{\sqrt{\bm{P}_{ij}} - \sqrt{\bm{Q}_{ij}}}^2 + \lprp{\sqrt{\distt(T, \bm{P}) (\nue)} - \sqrt{\distt(T, \bm{Q}) (\nue)}}^2} \\
    &\geq \frac{1}{2l} \sum_{i,j \in T} \lprp{\sqrt{\bm{P}_{ij}} - \sqrt{\bm{Q}_{ij}}}^2 \geq 1 - \frac{3 \epsilon}{16} - \sum_{i,j \in T} \frac{\sqrt{\bm{P}_{ij}\bm{Q}_{ij}}}{l} \\
    &= 1 - \frac{3\epsilon}{16} - v^\top \sq (\bm{Q}, \bm{P}) v \geq \frac{\epsilon}{2}
  \end{align*}

  where the second inequality is from our assumption on $T$ and $\bm{P}$ and the final inequality is from our definition of $\distt(\bm{P}, \bm{Q})$.

  \textbf{Case 2: } For the alternative case, we have $s = \sum_{i, j \in T} \bm{Q}_{ij} \leq (1 - 5\epsilon / 16)l$. In this case, we have from the definition of $\dtv$:
  \begin{equation*}
    \dtv \lprp{\distt(T, \bm{P}), \distt(T, \bm{Q})} \geq \frac{1}{l} \sum_{i, j \in T} \bm{P}_{ij} - \bm{Q}_{ij} \geq \frac{\epsilon}{4}
  \end{equation*}
  Therefore, we have from the relationship between the Hellinger distance and Total Variation distance in Definition~\ref{def:dheltv}:
  \begin{equation*}
    \dhel^2 \lprp{\distt(T, \bm{P}), \distt(T, \bm{Q})} \geq \frac{\epsilon^2}{32}
  \end{equation*}

\qed

\subsection{Proof of Lemma~\ref{lem:tli}}
\label{prf:tli}

  We begin by partitioning the word into $l / k$ blocks of length $k = \frac{2\log n}{\alpha^2}$ and let $Y_j$ denote the random variable denoting whether there is an element $X_k \notin T$ in the $j^{th}$ block. That is:

  \begin{equation*}
    Y_j = \bm{1} \{\exists i \in [(j - 1)k + 1, jk]: X_i \notin T\}
  \end{equation*}
  We will now prove bound $\mb{P} \{Y_j = 1 | X_1, \dots, X_{(j - 1)k}\}$. We will consider two cases:

  \textbf{Case 1: } $X_{j(k - 1) + 1} \notin T$. In this case, we have $\mb{P} \{Y_j = 1 | X_1, \dots, X_{(j - 1)k}\} = 1$.

  \textbf{Case 2: } In this case assume $X_{(j - 1)k + 1} = x \in T$. Here, we have from the property of the Markov chain that:
  \begin{equation*}
    \mb{P} \{Y_j = 0 | X_1, \dots, X_{(j - 1)k}, X_{(j - 1)k + 1} = x\} = e_{x}^\top \bm{P}_T^{k - 1} \bm{1} \leq \sqrt{n} \norm{\bm{P}_T}^{k - 1} \leq \frac{1}{2}
  \end{equation*}
  where the first inequality follows form Cauchy-Schwarz and the second follows from Lemma~\ref{lem:tnb}.

  Therefore, by combining the two cases above we have $\mb{P} \{Y_j = 1 | X_1, \dots, X_{(j - 1)k}\} \geq 0.5$ and we get:

  \begin{equation*}
    \mb{P} \lbrb{\sum_{i = 1}^l \bm{1}\{X_i \notin T\} \geq \frac{l}{8 \log n \alpha^2}} \geq \mb{P} \lbrb{\sum_{i = 1}^{l / k} Y_{i} \geq \frac{l}{4k}} \geq 1 - \delta
  \end{equation*}

  via an application of Hoeffding's inequality (See, for example, \cite{boucheron2013concentration}) and using our bound on $l / k$.

\qed

\subsection{Proof of Lemma~\ref{lem:mxk}}
\label{prf:mxk}

  We start by first fixing a particular element $i \in [k]$. Now, we have:
  \begin{equation*}
    \mb{E} [v_i] = \frac{m}{k}
  \end{equation*}
  Therefore, we have by an application of Theorem~1.1 in \cite{dubhashi_panconesi_2009} that:
  \begin{equation*}
    \mb{P} \lbrb{v_i \geq 2 \frac{m}{k}} \leq \exp \lprp{- \frac{m}{3k}} \leq \exp \lprp{-3\log \frac{n}{\epsilon}} \leq \lprp{\frac{\epsilon}{n}}^3
  \end{equation*}
  Finally, we get via an application of the union bound:
  \begin{equation*}
    \mb{P} \lbrb{\max_{i \in [k]} v_i \geq 2 \frac{m}{k}} \leq k \lprp{\frac{\epsilon}{n}}^3 \leq \frac{\epsilon}{n^2}
  \end{equation*}

\qed

\subsection{Proof of Lemma~\ref{lem:submc}}
\label{prf:submc}

  Since the chain $\bm{Y}$ is irreducible, we have that $\bm{X}$ is defined almost surely. Now, we will prove that $q_{ij} = \mb{P} \{X_{k + 1} = j | X_{k} = i\}$ is independent of $k$. We will do this by showing that $\bm{P} [X_{k + 1} = j| X_{k} = i, \tau_k = l]$ is independent of $l$ and $k$ as:
  \begin{align*}
    \mb{P} \{X_{k + 1} = j | X_{k} = i\} &= \sum_{l = 1}^\infty \mb{P} \{X_{k + 1} = j, \tau_k = l| X_{k} = i\} \\ 
    &= \sum_{l = 1}^\infty \mb{P} \{\tau_k = l| X_{k} = i\} \mb{P} \{X_{k + 1} = j | \tau_k = l, X_{k} = i\}
  \end{align*}

  Now, we define $\mathcal{P}_k$ to be sequences of states of length $k$ that begin with $i$ and end with $j$ but the elements in between are not in $T$. That is, if $(i_1, i_2, \dots, i_k) \in \mathcal{P}_k$, then we have $i_1 = i, i_k = j$ and $i_l \notin T,\ \forall l \in \{2, \dots, k-1\}$. Therefore, we get by the markov property of $\bm{Y}$ and the definition of $\bm{X}$:
  \begin{align*}
    \mb{P} \{X_{k + 1} = j | \tau_k = l, X_{k} = i\} &= \mb{P} \{Y_{\tau_{k + 1}} = j | Y_l = i\} = \sum_{m = l+1}^\infty \mb{P} \{\tau_{k + 1} = m, Y_{m} = j | Y_l = i\} \\
    &= \sum_{m = 2}^\infty \mb{P} \{\tau_{2} = m, Y_{m} = j | Y_1 = i\} = \sum_{r = 2}^\infty \sum_{\bm{i} \in \mathcal{P}_r} \prod_{s = 1}^{r - 1} \bm{P}_{i_si_{s+1}} \\
    &= \bm{P}_{ij} + \bm{e}_{i}^\top \lprp{\sum_{t = 1}^\infty \bm{P}_{T, \bar{T}} \bm{P_{\bar{T}}}^{t} \bm{P}_{\bar{T}, T}} \bm{e}_j
  \end{align*}

  This is independent of $k$ and therefore, the process $\bm{X}$ is a markov process and the claim about the transition matrix follows from the above expression as, we have for all $i,j \in T$ and $k \in \mb{N}$ $\mb{P}[X_{k + 1} = j | X_k = i] = \bm{Q}_{ij}$.

\qed

\subsection{Proof of Corollary~\ref{cor:expsmc}}
\label{prf:expsmc}

  The corollary is immediate as $\forall S \subset T, \abs{S} \leq \abs{T} / 2$:
  \begin{equation*}
    h_{\bm{Q}} (S) = \frac{\sum_{i \in S, j \in T \setminus S} \bm{Q}_{ij}}{\abs{S}} \geq \frac{\sum_{i \in S, j \in T \setminus S} \bm{P}_{ij}}{\abs{S}} \geq \alpha
  \end{equation*}
  where the last bound follows from the fact that $\bm{Q}_{ij} \geq \bm{P}_{ij}$ from Lemma~\ref{lem:submc}.

\qed

\subsection{Proof of Lemma~\ref{lem:htb}}
\label{prf:htb}

  To start, consider the markov chain with transition matrix $\bm{Q} = 0.5 (\bm{P} + \bm{P}^2)$. Given a trajectory of length $2l$ from the transition matrix $\bm{P}$, it is easy to simulate a trajectory of length $l$ from $\bm{Q}$ by simply taking the next element in the trajectory with probability $0.5$ and skipping an element with probability $0.5$. It follows that $\text{HitT}(\bm{P})$ is upper bounded by $2\hitt(\bm{Q})$.

  Now, let $1 = \lambda_1 \geq \lambda_2 \geq \dots \geq \lambda_n \geq -1$ be the eigenvalues of $\bm{P}$ and let $v_1, \dots, v_n$ be the eigenvectors. Note that we can take $v_1$ to be the vector $(1 / \sqrt{n}, 1 / \sqrt{n}, \dots, 1 / \sqrt{n})$ (The unit vector in the direction of the stationary distribution). Now, let $\pi$ be any distribution over the states $[n]$. Now, we have $\inp{\pi}{v_1} = 1 / \sqrt{n}$. And furthermore, we have $\forall i \in [n]$, $\inp{v_i}{\pi} \leq 1$. Now, note that since $\bm{P}$ and $\bm{Q}$ have the same set of eigenvectors $v_1, \dots, v_n$ and the corresponding eigenvalues for $\bm{Q}$ are $0.5 (\lambda_1 + \lambda_1^2), \dots, 0.5(\lambda_n + \lambda_n^2)$. Now, let $1 = \sigma_1 > \sigma_2 \dots \geq \sigma_n$ be the eigenvalues of $\bm{Q}$ with eigenvectors $v_1, u_2, \dots, u_n$. We have from the previous Lemma~\ref{lem:sinc} that:

  \begin{equation*}
    \abs{\sigma_i} \leq 1 - \frac{\alpha^2}{2}
  \end{equation*}

  as when $\lambda \leq 0$, the maximum absolute value of $0.5 (\lambda + \lambda^2)$ is $1 / 8$. Now, let $\pi_0$ be any starting distribution over states, then the distribution over the states at time $t$, $\pi_t$,  is $\pi_0 \bm{Q}^t$ and $\pi^*$ be the stationary distribution. Therefore, we get:

  \begin{equation*}
    \norm{\pi_t - \pi^*} = \norm*{\frac{1}{\sqrt{n}} v_1 - \pi^* + \sum_{i = 2}^n \sigma_i^t \inp{u_i}{\pi_0}u_i} \leq \sum_{i = 2}^n \sigma_i^t \abs{\inp{u_i}{\pi_0}} \leq n \lprp{1 - \frac{\alpha^2}{2}}^t \leq n\exp\lprp{- \frac{\alpha^2}{2}\cdot t}
  \end{equation*}

  Therefore, we have at $t^* = 4\log(10n) / \alpha^2$, we have:

  \begin{equation}
    \label{eqn:pbnd}
    \norm{\pi_t - \pi^*} \leq \frac{1}{4n}
  \end{equation}

  Therefore, we have by Equation~\ref{eqn:pbnd}:

  \begin{equation*}
    \hitt(\bm{Q}) \leq 4 \frac{\log (10n)}{\alpha^2} \cdot \frac{3}{4n} + \lprp{1 - \frac{3}{4n}} \lprp{4 \frac{\log (10n)}{\alpha^2} + \hitt(\bm{Q})}
  \end{equation*}

  By rearranging the above inequality, we get:

  \begin{equation*}
    \hitt(\bm{Q}) \leq 10 \frac{\log (10n)}{\alpha^2} \implies \hitt(\bm{Q}) \leq \ot \lprp{\frac{n}{\alpha^2}} \implies \hitt(\bm{P}) \leq \ot \lprp{\frac{n}{\alpha^2}}
  \end{equation*}

\qed

\end{document}